\title{Cohomology of preimages with local coefficients}
\author{Daciberg Goncalves} 
\address{Dept. de Matematica\\
IME - USP\\\newline
Caixa Postal 66.281\\
CEP 05311-970 Sao Paulo - SP\\Brasil}
\email{dlgoncal@ime.usp.br}
\urladdr{}
\author{Peter Wong}
\address{Department of Mathematics\\
Bates College\\\newline
Lewiston, ME 04240\\USA}
\email{pwong@bates.edu}
\urladdr{}
\let\xysavmatrix\xymatrix
\def\xymatrix{\disablesubscriptcorrection\xysavmatrix}
\def\cnewtheorem#1[#2]#3{\newtheorem{#1}{#3}[section]
\expandafter\let\csname c@#1\endcsname\c@theorem}
\newtheorem{theorem}{Theorem}[section]
\theoremstyle{definition}
\numberwithin{equation}{section}
\newtheorem*{remark}{Remark}
\begin{document}

\begin{asciiabstract}
Let M,N and B\subset N be compact smooth manifolds of dimensions n+k,n
and \ell, respectively. Given a map f from M to N, we give homological
conditions under which g^{-1}(B) has nontrivial cohomology (with local
coefficients) for any map g homotopic to f. We also show that a
certain cohomology class in H^j(N,N-B) is Poincare dual (with local
coefficients) under f^* to the image of a corresponding class in
H_{n+k-j}(f^{-1}(B)) when f is transverse to B. This generalizes a
similar formula of D\,Gottlieb in the case of simple coefficients.
\end{asciiabstract}

\begin{htmlabstract}
Let M,N and B&sub; N be compact smooth manifolds of dimensions
n+k,n and <i>l</i>, respectively. Given a map f:M &rarr;
N, we give homological conditions under which g<sup>-1</sup>(B) has
nontrivial cohomology (with local coefficients) for any map g
homotopic to f. We also show that a certain cohomology class in
H<sup>j</sup>(N,N-B) is Poincar&eacute; dual (with local coefficients) under
f<sup>*</sup> to the image of a corresponding class in H<sub>n+k-j</sub>(f<sup>-1</sup>(B))
when f is transverse to B. This generalizes a similar formula of
D&nbsp;Gottlieb in the case of simple coefficients.
\end{htmlabstract}

\begin{abstract}
Let $M,N$ and $B\subset N$ be compact smooth manifolds of dimensions
$n+k,n$ and $\ell$, respectively. Given a map $f\colon\thinspace M \to
N$, we give homological conditions under which $g^{-1}(B)$ has
nontrivial cohomology (with local coefficients) for any map $g$
homotopic to $f$. We also show that a certain cohomology class in
$H^j(N,N{-}B)$ is Poincar\'e dual (with local coefficients) under
$f^*$ to the image of a corresponding class in $H_{n+k-j}(f^{-1}(B))$
when $f$ is transverse to $B$. This generalizes a similar formula of
D\,Gottlieb in the case of simple coefficients.
\end{abstract}

\maketitle

\section{Preliminaries}

In 1978, D\,Gottlieb \cite{GWt} proved a beautiful formula which, in particular for fibrations, 
relates the homology
fundamental class of the fiber
with the cohomology fundamental class of the base. More precisely, let $f: M \to N$ be a smooth map where $M$ and $N$ are closed orientable smooth manifolds. Let $y\in N$ be a regular value for $f$ and 
let $F=f^{-1}(y)$ and consider $i_1\co F\hookrightarrow M$. If $[z_F]$ and $[z_M]$ denote the homology fundamental classes of $F$ and of $M$ respectively, and $\mu_N$ denotes the cohomology fundamental class of $N$, then Gottlieb \cite{GWt} proved that
\begin{equation}\label{gottlieb}
f^*(\mu_N) \frown [z_M]=i_{1_*}([z_F]).
\end{equation}
According to A\,Dold \cite{GWt}, this fact ``is a special case of a vague principle known to Hopf, which says that if a cycle and a cocycle are related by Poincar\'e duality in $N$, then $f^{-1}(\text{cycle})$ and $f^*(\text{cocycle})$ should be related by Poincar\'e duality in $M$." In fact, a similar relation was established by H\,Samelson in \cite{samelson} as follows. Suppose $F\subset M$ and $M$ are closed connected oriented manifolds of dimensions $k$ and $n+k$ respectively, and $\nu$ and $\mu$ are the corresponding homology fundamental classes of $F$ and of $M$. Then it was shown in Theorem B of \cite{samelson} that 
\begin{equation}\label{sam}
i_{1_*}(\nu)=u_F \frown \tilde \mu
\end{equation}
where $i_1\co F\hookrightarrow M$ is the inclusion, $u_F \in H^n(M,M-F)$ is the Thom class of $F$ in $M$ and $\tilde \mu \in H_{n+k}(M,M-F)$ is the element determined by $\mu$. The map 
$f\co M\to N$ induces an isomorphism
$H^k(N)\cong H^k(N,N-y_0) \to H^k(M,M-F)$ in the setting of \cite{GWt}, and one can deduce \eqref{gottlieb} from \eqref{sam}. 

Gottlieb used the formula \eqref{gottlieb} to
obtain results about the transfer \cite{GWt} and about the trace of a group action \cite{GWt2}. A similar formula was also obtained for Poincar\'e duality groups by Daccach and Franco \cite{da-fr}.
Local coefficients have played an important role in the study of nonorientable manifolds and in obstruction theory (see also Borsari and Gon\c calves \cite{bo-go}). In particular, 
a certain type of Gottlieb's formula has been obtained and used  in \cite{GJW} in the study of the coincidence problem of maps between manifolds of different dimensions. Let $p,q\co X\to Y$ be maps between two closed orientable manifolds such that $p$ and $q$ are transverse so that $C(p,q)=\{x\in X\,|\,p(x)=q(x)\}$ is a submanifold of dimension $m-n$ where $m=\dim X\ge \dim Y=n$. Under conjugation, $\mathbb Z [\pi]$ is a local system on $Y\times Y$ where $\pi=\pi_1(Y)$. The first author, Wong and Jezierski defined a twisted Thom class $\tilde \tau_Y \in H^n(Y^{\times};\mathbb Z [\pi])$ \cite{GJW}. Here $Y^{\times}$ denotes the pair $(Y\times Y, Y\times Y - \Delta Y)$ where $\Delta Y$ is the diagonal of $Y$ in $Y\times Y$. The primary obstruction to deforming $p$ and $q$ to be coincidence free on the $n$--th skeleton of $X$ is defined by 
$$
o_n(p,q):=[j(p\times q)d]^{*}(\tilde \tau_N)
$$
where $d\co X\to X\times X$ is the diagonal map and $j\co Y\times Y \hookrightarrow Y^{\times}$ is the inclusion. Then it was shown in \cite{GJW} that $o_n(p,q)$ is Poincar\'e dual to the twisted fundamental class representing $C(p,q)$ with local coefficients $\pi^*$ induced from $\mathbb Z[\pi]$, ie,
\begin{equation}\label{gjw-formula}
o_n(p,q) \frown [z_X] = [z^{\pi^*}_{C(p,q)}] \quad \in H_{m-n}(X;\pi^*).
\end{equation}
Furthermore, this formula has been employed successfully in Gon{\c{c}}alves and Wong \cite{daci-peter2} in the study of coincidences and of roots. While Gottlieb's formula \eqref{gottlieb} relates the cohomology fundamental class of the base 
$N$ with the homology fundamental class of the fiber $F$, it does not determine whether $F$ is empty or not and hence it does not give any information about the (homological) size of $F=f^{-1}(y)$.

To see how cohomology with local coefficients can yield more information, let us consider the following example.

\begin{example}\label{gjw-ex}
Let $\mathcal H$ be the Heisenberg $3$--dimensional nilmanifold, $T$ the $2$--dimen\-sion\-al torus and $$S^1 \stackrel{i}{\to} \mathcal H \stackrel{f}{\to} T$$ a fibration inducing the homomorphism in the fundamental group $f_{\#}\co \pi_1(\mathcal H)  \to \pi_1(T)$ which is the abelianization. To avoid technical difficulties in dimension 2, consider the  fibration $p\co \mathcal H\times S^1\to T\times S^1=T^3$ where $p=f\times 1_{\smash{S^1}}$ is the product map. The homomorphism $i_*\co H_1(S^1) \to H_1(\mathcal H\times S^1)$ induced by the inclusion of the fiber is the zero homomorphism since $H_1(\mathcal H\times S^1) \to H_1(T^3)$ is an isomorphism. By Gottlieb's formula, we have
$$
i_*(\mu'_1)=p^*(z_2)\frown \mu_1
$$
where $\mu_1$ denotes the homology fundamental class of $\mathcal H\times S^1$, $z_2$ the cohomology fundamental class of $T^3$ and $\mu'_1$ the homology fundamental class of the circle fiber $S^1=p^{-1}(y), y\in T^3$. 
This shows that $p^*(z_2)=0$.

On the other hand, using the local coefficient system $\mathbb Z[\pi_2]$ on $\mathcal H\times S^1$ induced by the map $p$, one can show that $H^3(\pi_1;\mathbb Z[\pi_2])\cong \mathbb Z\ne 0$ and in addition that the inclusion $S^1 \hookrightarrow \mathcal H\times S^1$ induces an isomorphism on homology with local coefficients. Here, $\pi_1=\pi_1(\mathcal H\times S^1)$ and $\pi_2=\pi_1(T^3)$.
Because the target is a group, a root problem can be regarded as a coincidence problem (see \fullref{coin=root}). Then we can apply  
\eqref{gjw-formula} and see that the cohomology class representing the primary obstruction to deforming the map to be root free is Poincar\'e dual to the twisted fundamental class $$[z^{\pi^*_2}_{p^{-1}(y)}]\in H_1(\pi_1;\mathbb Z[\pi_2])\cong H^3(\pi_1;\mathbb Z[\pi_2]).$$ This class turns out to be nontrivial \cite{daci-peter2}. It follows that the obstruction to deforming $p$ to $\hat p$ with $\hat p^{-1}(y)=\emptyset$ is nontrivial. Hence, $F$ has nontrivial homology (with local coefficients) in dimension one.
\end{example}

This example uses two important facts: (i) the primary obstruction to deforming a map to be root free is Poincar\'e dual to the twisted fundamental class by means of a Gottlieb type formula, and (ii) the homomorphism induced by $p$ is nontrivial when local coefficients are employed. Another example which illustrates the phenomenon above can be found in \cite{GJW}.

In studying injective points, A\,Wyler \cite{wyler} proved an interesting result concerning the cohomological size of $f^{-1}(B)$. He showed that if $R$ is a ring (with unity) and ${f_*}_n\co H_n(M;R)\to H_n(N;R)$ is surjective then the composite
$$
\check H^{i+k}(f^{-1}(B);\!R) \to H_{n-i}(M,M{-}f^{-1}(B);\!R) \stackrel{f_*}{\to} H_{n-1}(N,N{-}B;\!R) \to H^i(B;\!R)$$
is also surjective for all $i$. This holds for all maps in the same homotopy class of $f$.

In topological coincidence theory, one studies the coincidence set $$C(p,q)=\{x\in X\,|\,p(x)=q(x)\}$$ of two maps $p,q\co X\to Y$. Let $\Delta Y$ denote the diagonal of $Y$ in  $Y\times Y$. Then $C(p,q)=f^{-1}(B)$ with  
$M=X, N=Y\times Y$ and $B=\Delta Y$, and  $f\co M\to N$ is given by $f=(p\times q)\circ d$ where $d\co X \to X\times X$ is the diagonal. In \cite{GJW}, we studied the coincidence problem for transverse maps. Since $C(p',q')$ changes for $p'\sim p, q'\sim q$, it is natural to look for homological information about $C(p',q')$.

Motivated by this question in coincidence theory and by the work of Wyler \cite{wyler}, we explore the general problem of obtaining a (co)homological measure of the coincidence set $C(p,q)$ or the preimage set $f^{-1}(B)$ in general.

Our first objective of this paper is to give a general formula in the context of homology and cohomology with local coefficients which contains both Gottlieb's formula \eqref{gottlieb} and the formula \eqref{gjw-formula} as special cases, thereby further exploring the so-called {\it vague principle\/} of H\,Hopf. Secondly, we give homological conditions on a map $f\co M \to N \supset B$ such that $f^{-1}(B)$ has nontrivial cohomology with local coefficients.

This paper is organized as follows. In \fullref{sec2}, we give a generalized Gottlieb's theorem (\fullref{general}) with local coefficients. This formula reduces to Gottlieb's formula in the root case with trivial integral coefficients and orientable manifolds (\fullref{root}) and it reduces to the formula \eqref{gjw-formula} in the coincidence setting (\fullref{coin}). The formula in the root case does not seem to be a special case of that in the coincidence case unless further conditions are imposed (see \fullref{coin=root}). In \fullref{sec4}, we generalize the work of A\,Wyler \cite{wyler} in studying the nonvanishing of the induced homomorphism of a map with simple coefficients. In \fullref{sec5}, we obtain the results of \fullref{sec4} for local coefficients.

{\bf Acknowledgements}\qua This work was conducted during the first author's visit to Bates College, April 11--23, 2003 and the second 
author's visits to IME-USP, October 14--21, 2002, May 12--22, 2003, and April 27--May 4, 2004. The visit of the first author was supported by a grant 
from the ``Projeto tem\'atico Topologia
Alg\'ebrica e Geom\'etrica-FAPESP." The visits of the second author 
were partially supported by a grant 
from Bates College, the ``Projeto tem\'atico Topologia
Alg\'ebrica e Geom\'etrica-FAPESP," the ``Projeto 1-Pr\'o-Reitoria de Pesquisa-USP" and the NSF. We thank the referee for making useful suggestions which lead to a better and more coherent exposition.
\section{Generalized Gottlieb's formula}\label{sec2}

Let $M$, $N$ and $B \subset N$ be closed smooth compact  manifolds of dimensions $k{+}n, n$ and $\ell$, respectively.  Let $ f\co  M \to N$ be a map transverse to $B$ and $i_1\co  f^{-1}(B) \hookrightarrow M$ be the inclusion. Therefore, $F=f^{-1}(B)$ is a closed submanifold of dimension $k+\ell$. No orientability is assumed.

Recall that over 
a manifold $N$ there is a local coefficient system $\mathcal O_N$ called 
{\it the orientation local coefficient system\/} which has $\mathbb Z$ as the typical group and the 
homomorphism $w\co \pi_1(N) \to {\rm Aut}(\mathbb Z)$ is given by
$w(\alpha)=\sgn_N(\alpha)$, the sign of $\alpha$ in $N$. Consider a local coefficient system $R_N$ over $N$ with typical group $R$. Then we have the local system over $M$ induced by $f$, denoted by $R_M$, and define the local system $R_M'=R_M \bigotimes \mathcal O_M$.
Denote by $R_F'$ the local system over $F$ induced by $i_1\co F \hookrightarrow M$ from the system $R_M'$, ie $R'_F=i_1^*R'_M$.

We now define homomorphisms  $\phi(j)\co H^{j}(N,N{-}B;R_N) \to H_{n+k-j}(F; R'_F)$ for $0 \leq j \leq n$ and then we prove the general Gottlieb's formula.

\begin{definition}\label{Define} For $0\le j\le n$, let $\phi(j)$ be the composite homomorphism
$$ H^j(N, N{-}B;R_N)   \stackrel{ f^*}{\longrightarrow}  H^j(M,M-F;R_M)  \stackrel{ A^{-1}}{\longrightarrow} 
   H_{n+k-j}(F;R'_{F})$$
where A is the duality isomorphism as in Theorem 6.4 of Spanier \cite{Spanier} 
\end{definition}

A closer look at \fullref{Define} suggests that $\phi(j)$ can be expressed using the homology transfer as in Dold \cite[Definition 10.5, p.310]{dold}, provided we use local coefficients. This alternative way to describe $\phi(j)$ may have interest in its own right.

First we recall (for maps between oriented manifolds) that Dold's homology transfer is the composite
\begin{equation}\label{dold-transfer}
H_{n{-}j}(U_2) \stackrel{(\frown [z_N])^{-1}}{\longrightarrow} H^j(N,\!N{-}U_2) \stackrel{f^*}{\to} H^j(M,\!M{-}U_1) \stackrel{(\frown [z_M])}{\longrightarrow} H_{n{+}k{-}j}(U_1)
\end{equation}
where 
$U_2 \stackrel{i_2}{\hookrightarrow} N$ is an open set of $N$ and  $f^{-1}(U_2)=U_1$
By incorporating local coefficients without assuming orientability, we define a similar transfer $f_{!}$ to be the composite
\begin{multline}\label{homology-transfer}
H_{n-j}(U_2;i_2^*R'_N) \stackrel{D_N}{\longrightarrow} H^j(N,N-U_2; R_N) \stackrel{f^*}{\longrightarrow} H^j(M,M-U_1; R_M) \\
\stackrel{D^{-1}_M}{\longrightarrow} H_{n+k-j}(U_1;i_1^*R'_M)
\end{multline}
where $D_N$ and $D_M$ are the duality isomorphisms in $N$ and in $M$, respectively, with local coefficients and $R'_N=R_N\bigotimes \mathcal O_N$.

To relate the definition of $\phi(j)$ to the Dold transfer we introduce $\phi'(j)$ as follows.

\begin{definition}\label{Define1} 
Let $U \subset N$  be an open ($\bar U$ be a closed)  tubular neighborhood of $B$ such that $f^{-1}(U)=U_1 \subset M$ is  an open ($\bar U_1$ a closed) tubular neighborhood of 
$f^{-1}(B)=F$. Consider the composite 
\begin{equation}\label{alternative}
\phi'(j):= r_{1_*} \circ f_{!}\circ (r_{2_*})^{-1}\circ {D_N}^{-1}
\end{equation} 
where $r_2\co  U_2 \to B$ and $r_1\co  U_1 \to F$ are the retractions.
\end{definition}

 It can be shown that $\phi(j)=\phi'(j)$.

Next we compute the groups $H^{n-\ell}(N,N{-}B;R_N)$ and $H_{k+\ell}(F;R'_F)$
for an arbitrary system $R_N$, where $R'_F$ is obtained from $R'_N$ as above.
 Let $i_2\co B \to N$ be the inclusion. Write
\begin{equation*}
\begin{aligned}
R_1&=\{ r \in R \,|\, \sgn_N(\theta)\sgn_B(\theta)i_{\#}(\theta) r=r\} &\quad  
&\text{for all~} \theta \in \pi_1(B), \\
R_2&=\{ r \in R \,|\, \sgn_F(\alpha)\sgn_M(\alpha)f_{\#}(\alpha)r=r\} &\quad &\text{for all~} \alpha \in \pi_1(f^{-1}(B)).
\end{aligned}
\end{equation*}

\begin{proposition} \label{identification}
The groups $H^{n-\ell}(N,N{-}B;R_N)$ and $H_{k+\ell}(F,R'_F)$ are isomorphic to $R_1$ and $R_2$ respectively and $R_1\subset R_2$.
\end{proposition}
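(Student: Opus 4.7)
The plan is to compute each group via duality with local coefficients and then identify it with a group of invariants in $R$. For $H^{n-\ell}(N,N{-}B;R_N)$ I would first take a closed tubular neighborhood $\bar U$ of $B$ and use excision together with a homotopy equivalence $(\bar U,\partial\bar U)\simeq (\nu, \nu_0)$ of the normal disk/sphere pair of $B$ in $N$. A Thom isomorphism with local coefficients (thinking of the Thom class of $\nu$ as living in cohomology twisted by $\mathcal O_\nu$, the orientation system of the normal bundle) gives
\[
H^{n-\ell}(N,N{-}B;R_N)\;\cong\; H^0\bigl(B;\,i_2^*R_N\otimes\mathcal O_\nu\bigr).
\]
Since $TN|_B=TB\oplus\nu$, the orientation character $w_1(\nu)$ pulled back to $\pi_1(B)$ equals $\sgn_N\!\circ i_{2\#}\cdot\sgn_B$. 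Thus $H^0$ is the subgroup of $r\in R$ fixed by the action $\theta\mapsto \sgn_N(i_{2\#}\theta)\sgn_B(\theta)i_{2\#}(\theta)$, which is exactly $R_1$.

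For $H_{k+\ell}(F;R'_F)$ I would apply Poincar\'e duality with local coefficients to the closed $(k{+}\ell)$--manifold $F$:
\[
H_{k+\ell}(F;R'_F)\;\cong\; H^0\bigl(F;\,R'_F\otimes\mathcal O_F\bigr).
\]
Unwinding the definition, $R'_F=i_1^*(R_M\otimes\mathcal O_M)=(f\circ i_1)^*R_N\otimes i_1^*\mathcal O_M$, so the action of $\alpha\in\pi_1(F)$ on $r$ in the twisted system $R'_F\otimes\mathcal O_F$ is $\sgn_F(\alpha)\sgn_M(i_{1\#}\alpha)f_{\#}(i_{1\#}\alpha)$, and its invariants form precisely $R_2$.

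For the inclusion $R_1\subset R_2$, the key input is that $f$ is transverse to $B$, so the normal bundle of $F$ in $M$ is the pullback $\nu_{F\subset M}=(f|_F)^*\nu_{B\subset N}$. Taking first Stiefel--Whitney classes and using $TM|_F=TF\oplus\nu_{F\subset M}$ and $TN|_B=TB\oplus\nu_{B\subset N}$, I get the identity of characters on $\pi_1(F)$:
\[
\sgn_M\!\circ i_{1\#}\cdot\sgn_F \;=\; w_1(\nu_{F\subset M})
\;=\;(f|_F)^*w_1(\nu_{B\subset N})
\;=\; \bigl(\sgn_N\!\circ i_{2\#}\cdot\sgn_B\bigr)\circ (f|_F)_{\#}.
\]
Given $r\in R_1$ and $\alpha\in\pi_1(F)$, I then apply the defining relation of $R_1$ to $\theta=(f|_F)_{\#}\alpha\in\pi_1(B)$; combining with the identity above and the fact that $i_{2\#}(f|_F)_{\#}=f_{\#}i_{1\#}$ yields the relation defining $R_2$, so $r\in R_2$.

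The main obstacle is bookkeeping the orientation characters correctly: one must verify that the Thom isomorphism I want in the first step genuinely takes the form $H^{n-\ell}(N,N{-}B;R_N)\cong H^0(B;i_2^*R_N\otimes\mathcal O_\nu)$ (as opposed to other twists), and match this with how $R'_M=R_M\otimes\mathcal O_M$ is defined so the $\pi_1$--action turns out to be exactly what defines $R_1$ and $R_2$. Once the $w_1$--identity coming from transversality is in place, the inclusion $R_1\subset R_2$ falls out immediately.
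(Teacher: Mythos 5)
Your proof is correct and takes essentially the same approach as the paper: the paper computes $H^{n-\ell}(N,N{-}B;R_N)$ by composing duality in $N$ (landing in $H_\ell(B;R'_B)$) with duality in $B$, which amounts to the same twisted Thom--isomorphism identification $H^0(B;i_2^*R_N\otimes\mathcal O_N|_B\otimes\mathcal O_B)=R_1$ that you obtain. The inclusion $R_1\subset R_2$ is established by the identical key observation: transversality makes the tubular neighborhood of $F$ in $M$ the preimage of that of $B$ in $N$, so $\sgn_F(\alpha)\sgn_M(\alpha)=\sgn_N(f_{\#}\alpha)\sgn_B(f_{\#}\alpha)$ on $\pi_1(F)$ and the two actions agree there.
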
     
\begin{proof} By duality with local coefficients \cite{Spanier}, $H^{n-\ell}(N,N{-}B;R_N)$ is
isomorphic to $H_{\ell}(B; R'_{\smash{B}})$ where $R'_{\smash{B}}=i_2^*R'_i{\smash{N}}$ is the restriction of $R'_N=R_N \bigotimes \mathcal O_N$ to $B$. Now we apply duality again in $B$ and we obtain
$H^0(B;R'_B \bigotimes \mathcal O_B)$ which is precisely the subgroup $R_1$. For the
calculation of 
$H_{k+\ell}(F;R'_F)$, again we use duality. The local coefficient system $R'_F \bigotimes \mathcal O_F$ is exactly the one given by $\alpha * r=\sgn_F(\alpha)\sgn_M(\alpha)f_{\#}(\alpha)r$.

In order to show that $R_1 \subset R_2$, observe that for $\alpha \in \pi_1(f^{-1}(B))$ we have that $\sgn_F(\alpha)\sgn_M(\alpha)=\sgn_N(f_{\#}(\alpha))\sgn_B(f_{\#}(\alpha))$. This follows since we have $U_2$ and $U_1$  tubular neighborhoods of $B$ and of $F$ respectively, where $U_1=f^{-1}(U_2)$. So the two actions coincide on the subset 
$\pi_1(f^{-1}(B))=\pi_1(F)$ and the result follows. 
\end{proof}

Now, we prove the following general Gottlieb's formula.
 
\begin{theorem} \label{general}
Let $f\co  M \to N$ be transverse to the submanifold $B \subset N$. Let 
$\phi(j)\co H^{j}(N,N{-}B;R_N) \to H_{n+k-j}(F; R'_F)$ be the homomorphism defined as a\-bove.
With respect to the local coefficient systems $R_N$, $R_M$, $R_M'$ and $R'_F$ over $N$, $M$, $M$ and $F$, respectively, for every $r \in H^j(N,N{-}B;R_N)$, the element  $f^*j_2^*(r)$ 
is the Poincar\'e dual of $i_{1_{*}}\phi(j)(r)$,
ie, 
\begin{equation}\label{general-formula}
i_{1_{*}}\phi(j)(r)=f^{*}(j_2^*(r))\frown [z_M].
\end{equation}
Furthermore, the homomorphism $\phi(n-\ell)$ corresponds to the inclusion after identifying 
$H^{n-\ell}(N,N{-}B;R_N)$ and $H_{k+\ell}(F;R'_F)$ with the subgroups $R_1$ and $R_2$ of $R$, respectively as in \fullref{identification}. 
\end{theorem}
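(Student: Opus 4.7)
The plan is to deduce the formula \eqref{general-formula} from the fact that the duality isomorphism $A$ of \fullref{Define} is realized, after composition with $i_{1*}$, by the cap product with the twisted fundamental class $[z_M]\in H_{n+k}(M;\mathcal O_M)$. The key technical input is the commutativity of the diagram
\[
\begin{CD}
H^j(M,M{-}F;R_M) @>A^{-1}>> H_{n+k-j}(F;R'_F) \\
@Vj_1^*VV @VVi_{1*}V \\
H^j(M;R_M) @>\frown [z_M]>> H_{n+k-j}(M;R'_M),
\end{CD}
\]
where $j_1^*$ is the map forgetting the subspace $M-F$. This is the local-coefficient analogue of Spanier's Theorem 6.4 and is implicit in the alternative description $\phi(j)=\phi'(j)$ recorded after \fullref{Define1}.

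Granting this square, the first assertion is a short diagram chase. Setting $c=f^*(r)\in H^j(M,M{-}F;R_M)$, by definition $\phi(j)(r)=A^{-1}(c)$, so the square yields $i_{1*}\phi(j)(r)=j_1^*(c)\frown [z_M]$. Because $f$ is a map of pairs $(M,M{-}F)\to(N,N{-}B)$, naturality of the forget-pair map gives $j_1^*\circ f^*=f^*\circ j_2^*$, which converts the right-hand side into $f^*(j_2^*(r))\frown [z_M]$, exactly the formula \eqref{general-formula}.

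For the second assertion, I would trace $\phi(n-\ell)$ through the double-duality identifications used in the proof of \fullref{identification}. By the naturality of Poincar\'e--Lefschetz duality on $N$ and on $M$, followed by the Poincar\'e duality on $B$ and on $F$ that identifies top-dimensional (twisted) homology with $H^0$, the composite $\phi(n-\ell)=A^{-1}\circ f^*$ becomes the pullback $(f|_F)^*\colon H^0(B;R'_B\otimes\mathcal O_B)\to H^0(F;R'_F\otimes\mathcal O_F)$. But a homomorphism between $H^0$-groups with local coefficients is merely the restriction of invariance conditions, and \fullref{identification} has already verified that the $\pi_1(B)$-action defining $R_1$ and the $\pi_1(F)$-action defining $R_2$ agree on $\pi_1(F)$ via $f_\#$. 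Hence every element of $R_1$ is automatically in $R_2$ and is sent to itself, so $\phi(n-\ell)$ is the inclusion $R_1\hookrightarrow R_2$.

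The principal obstacle is verifying the commutative square of the first paragraph rigorously in the twisted, nonorientable setting: one must check that Spanier's construction of $A^{-1}$ --- via excision to a tubular neighborhood, cap with a local fundamental class, and retraction to $F$ --- remains compatible with the global cap product $\frown [z_M]$ once everything is tensored with the orientation system $\mathcal O_M$. Once this compatibility is established, the remainder of the theorem follows by purely formal naturality manipulations.
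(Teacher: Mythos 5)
Your proof is correct and follows essentially the same route as the paper: the formula comes from the commutativity of the diagram whose top row is $\phi(j)$ and whose bottom row is $f^*$ followed by $\frown[z_M]$, with the left square given by naturality of $f^*$ for the pair map and the second assertion reduced to \fullref{identification}. The one step you flag as the principal obstacle --- compatibility of $A^{-1}$ with $i_{1*}$ and the global cap product in the twisted setting --- is exactly the square the paper also asserts without further verification.
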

\begin{proof}  
Consider the following commutative diagram where $j_2\co (N,\emptyset) \to (N,N{-}B)$ and $j_1\co (M,\emptyset) \to (M,M-F)$ are inclusions.
\begin{equation}\label{exact1}
\begin{CD}
    { M }   @>>>   N    \\
    @V{j_1}VV  @V{j_2}VV   \\
    {(M,M-F)}    @>>>  (N,N{-}B) 
\end{CD}
\end{equation}
Together with duality isomorphisms, \eqref{exact1} induces the commutative diagram:
\begin{equation}\label{exact2}
\begin{CD}
    H^j(N,\! N{-}B;\!R_N)   @>{f^*}>>  H^j(M,\!M{-}F;\!R_M) @>{A^{-1}}>>   H_{n+k-j}(F;\!R'_{F})  \\
    @V{j_{2}^*}VV  @V{{j_1}^*}VV   @V{i_{1_* }}VV \\
    {H^j(N;\!R_N)}    @>{f^*}>> H^j(M;\!R_{M}) @>{\frown [z_M]}>> H_{n+k-j}(M;\!R'_{M})
\end{CD}
\end{equation}
The composite homomorphism of the top row of \eqref{exact2} gives $\phi(j)$ by \fullref{Define}. Thus, the commutativity of \eqref{exact2} implies \eqref{general-formula}. Now, for $j=n-\ell$, \fullref{identification} completes the second assertion.
\end{proof}

\begin{remark} In \cite{dob2}, R\,Dobre{\'n}ko defined the primary obstruction $o_B(f)$ to deforming a map $f$ (transverse to $B$) out of a subspace $B\subset N$. In view of \fullref{general}, our general duality formula \eqref{general-formula} shows that Dobre{\'n}ko's obstruction class $o_B(f)$ is Poincar\'e dual to the image of the homology class representing $f^{-1}(B)$.
\end{remark}

\section{Roots and coincidences}

In this section we consider in details two special cases of \fullref{general}. For one of them, we take $B$ to be a point. This is related to the study of roots. The other case, we take $p,q\co  X\to Y$ a pair of maps
between two closed manifolds. Then we consider the map  $f\co M \to N$ where $f=(p\times q)\circ d, M=X, N=Y\times Y$ and $B=\Delta Y$. It turns out that $f^{-1}(B)=C(p,q)=\{x\in X\,|\,p(x)=q(x)\}$ is the coincidence set  of the two maps $p,q\co X\to Y$. 
This latter case corresponds to the study of coincidences of a pair of maps.

Suppose $B$ is a point and $R_N$ is an arbitrary local coefficient system over $N$ with typical group $R$. We obtain:

\begin{theorem}\label{root}
In the case $B=y_0$ is a point, the local coefficient system $R'_F$ is  $R\bigotimes \mathcal O_F$ and we have the following:
\begin{enumerate}
\item $H^j(N,N-y_0;R_N)=0$ for $j\ne n$ and $H^n(N,N-y_0;R_N)=R$.
\item $H_{k}(F;{R\bigotimes \mathcal O_F})=R$.
\item The homomorphism $\phi({n})\co H^n(N,N-y_0;R_N) \to H_k(F;R'_F)$ under the identification above is the identity. Therefore, if $y_0$ lies in the interior of an $n$--simplex $\sigma_0$ and $c_n$ denotes the elementary cocycle representing the cohomology fundamental class so that $c_n(\sigma_0)=r \in R$, then the pullback of $[c_n]$ is dual to the image of the homology class $[z_F]\otimes r$.
\item When $R_N=\mathbb Z$ is the trivial local coefficient system and the manifolds are closed and  orientable, we obtain Gottlieb's formula \eqref{gottlieb}.
\end{enumerate}
\end{theorem}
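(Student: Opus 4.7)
The plan is to invoke \fullref{identification} and \fullref{general} directly, reducing each of the four assertions to the observation that when $B=\{y_0\}$ is a point, the subgroups $R_1,R_2\subset R$ of \fullref{identification} both collapse to all of $R$. The only geometric input beyond that proposition is that the normal bundle of $F$ in $M$ is trivial in this special case.

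Before touching any homology, I would verify the asserted equality $R'_F=R\otimes\mathcal O_F$. Because $f\circ i_1$ factors through the single point $y_0$, the pullback system $R_F=i_1^*f^*R_N$ is constant with fiber $R$. Transversality then tells us that the normal bundle $\nu_F$ of $F$ in $M$ is the pullback under $f|_F$ of the normal bundle of $y_0$ in $N$, namely $T_{y_0}N$ viewed as a trivial bundle over a point; so $\nu_F$ is trivial. Combined with $TM|_F\cong TF\oplus \nu_F$ this yields $i_1^*\mathcal O_M=\mathcal O_F$ and hence $R'_F=R\otimes\mathcal O_F$.

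For (1), I would apply \fullref{identification} with $\ell=0$ to identify $H^n(N,N{-}y_0;R_N)$ with $R_1$; since $\pi_1(\{y_0\})$ is trivial the defining condition on $R_1$ is vacuous, so $R_1=R$. Vanishing in degrees $j\ne n$ is excision to a coordinate ball $U$ around $y_0$ on which $R_N$ trivializes, giving $H^j(N,N{-}y_0;R_N)\cong H^j(\mathbb R^n,\mathbb R^n{-}0;R)$. For (2), the same proposition identifies $H_k(F;R'_F)$ with $R_2$; the defining action $\alpha * r=\sgn_F(\alpha)\sgn_M(\alpha)f_\#(\alpha)r$ is trivial because $f_\#(\alpha)=1\in\pi_1(N)$ (as $f|_F$ is constant) and $\sgn_F(\alpha)\sgn_M(\alpha)=1$ (as $\nu_F$ is orientable, so $\sgn_M|_{\pi_1(F)}=\sgn_F$). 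Hence $R_2=R$.

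For (3), the second assertion of \fullref{general} says that $\phi(n-\ell)=\phi(n)$ corresponds under the identifications above to the inclusion $R_1\hookrightarrow R_2$, which here is the identity $R\to R$; the elementary-cocycle description is read off by chasing a chosen generator $r\in R$ through these identifications. Finally (4) is a direct specialization: when $R_N=\mathbb Z$ and $M,N$ are orientable, the systems $\mathcal O_M$, $\mathcal O_F$, and $R'_F$ are all trivial, and the cap-product identity of \fullref{general} reduces verbatim to Gottlieb's formula \eqref{gottlieb}. The only step with any real content is the orientability of $\nu_F$ (which drives both $R_1=R$ and $R_2=R$); everything else is formal bookkeeping against \fullref{identification} and \fullref{general}.
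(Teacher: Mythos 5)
Your proposal is correct and follows essentially the same route as the paper: both compute the two groups via the duality identifications of \fullref{identification} (the paper phrases part (1) directly as duality against $H_{n-j}(y_0;R_{\{y_0\}})$, you phrase it via excision plus $R_1=R$, which amounts to the same thing), and both deduce (3) and (4) by specializing the second assertion of \fullref{general} to $\ell=0$. Your explicit verification that $\nu_F$ is trivial, hence $i_1^*\mathcal O_M=\mathcal O_F$ and $R'_F=R\otimes\mathcal O_F$, is a detail the paper simply asserts, and it is a welcome addition rather than a deviation.
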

\begin{proof} (1)\qua By duality, $H^j(N,N-y_0;R_N) \cong H_{n-j}(y_0;R_{\{y_0\}})=0$ for $j\ne n$. On the other hand, $H_{0}(y_0;R_{\{y_0\}})=R$. 

(2)\qua The system $R'_F=\mathcal O_F \bigotimes R$  and $k=\dim F$ since $\ell=0$ so the assertion follows. 

(3)\qua This follows from \fullref{general}. 

(4)\qua When $R_N=\mathbb Z$ and $\ell=0$, $\phi(n)$ is the identity from (3) and $j_2$ induces an isomorphism identifying $H^n(N)$ with $H^n(N,N-y_0)$. Now \eqref{general-formula} reduces to Gottlieb's formula \eqref{gottlieb}. \end{proof} 

For the second case we consider  $f\co M \to N$ where $f$ is the map $(p\times q)\circ d$ from $M=X$ to $N=Y\times Y$, $B=\Delta Y$,  and  $F=(p\times q)^{-1}(\Delta Y)$ which is the same as $C(p,q)$.  
If  $\dim Y=n$ then we have $\dim N=2n$,
$\dim M=k{+}2n$ and $\ell=n$. Denote by $\psi\co \pi_1(N\times N) \to {\rm Aut}(R)$ the representation corresponding to the given local coefficient system. We obtain:

\begin{theorem}\label{coin} 
In the case $N=Y\times Y$ and $B=\Delta Y$, the local coefficient systems $R_{\Delta Y} (=R_B)$ and $R_{C(p,q)}(=R_F)$ are given by the equations $(\alpha,\alpha)*r=\psi(\alpha,\alpha)$ and $\beta *r=\psi(p_{\#}(\beta),q_{\#}(\beta))$, respectively. We have the following:
\begin{enumerate}
\item $H^j(Y^{\times};R_{Y{\times} Y})=0$ for $j<n=\dim Y$, and $H^n(Y^{\times};R_{Y{\times} Y})$ consists of the elements of $R$ that are fixed by the action of $\sgn_Y(\alpha)\psi(\alpha,\alpha)$ for all $\alpha\in \pi_1(Y)$. In particular, when $R_{Y{\times} Y}$ is given by $\pi_n(Y,Y{-}y_0)$, the group $H^n(Y^{\times};\!R_{Y{\times} Y})$ is isomorphic to the sum $\sum \mathbb Z$ indexed by the center of the group $\pi=\pi_1(Y)$.
\item $H_{k+n}(C(p,q);R'_{\smash{C(p,q)}})$ consists of the elements of $R$ that are fixed by the action of $\psi(p_{\#}\beta,p_{\#}(\beta))$ for all $\beta\in \pi_1(C(p,q))$. In particular, when $R_{Y\times Y}$ is given by $\pi_n(Y,Y-y_0)$, the group $H_{k+n}(C(p,q);R'_{\smash{C(p,q)}})$ is isomorphic to the sum $\sum \mathbb Z$ indexed by the centralizer of the subgroup $p_{\#}(\pi_1(C(p,q)))$ in $\pi_1(Y)$.
\item The homomorphism    
$$\phi({n})\co  H^n(Y^{\times};R_{Y\times Y}) \to H_{k+n}(C(p,q);R'_{C(p,q)})
$$
under the identification above is the inclusion. Thus, for $\sigma_0$ an $n$--simplex transversal to the diagonal and $c_n$ an elementary cocycle with $c_n(\sigma_0) = r \in R$, the pullback of $[c_n]$ is dual to the image of $[z_F] \otimes m$ where $[z_F]$ is the fundamental homology class of $F$.
\item When $R_N=\pi_n(Y,Y-y_0)$ and the manifolds are closed and orientable, we obtain the formula 
\eqref{gjw-formula}.
\end{enumerate}
\end{theorem}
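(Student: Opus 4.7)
The plan is to obtain all four parts as direct specializations of \fullref{general} and \fullref{identification} to the diagonal setting $N=Y\times Y$, $B=\Delta Y$, $f=(p\times q)\circ d$, in which the relevant dimensions become $\dim N=2n$, $\dim B=\ell=n$, and $\dim F = k+n$. Under this dictionary, the index $n-\ell$ appearing in the statement of \fullref{general} becomes $n$, so the second clause of \fullref{general} will identify $\phi(n)$ with the inclusion $R_1\hookrightarrow R_2$, which is exactly what assertion (3) asserts. The remaining work is to (a) describe the groups $R_1$ and $R_2$ explicitly in terms of $\psi$ and (b) recover Gottlieb's coincidence formula \eqref{gjw-formula} by plugging in $R_N = \pi_n(Y,Y-y_0)$.

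For part (1), I would first apply Poincar\'e--Lefschetz duality with local coefficients to get $H^j(Y^{\times};R_{Y{\times} Y})\cong H_{2n-j}(\Delta Y; R'_{\Delta Y})$; since $\Delta Y\cong Y$ is $n$--dimensional this vanishes whenever $2n-j>n$, which is precisely the range $j<n$. For $j=n$, apply duality on $\Delta Y$ once more, following the proof of \fullref{identification}, to identify the group with $H^0(\Delta Y; R'_{\Delta Y}\otimes\mathcal O_{\Delta Y})$, i.e.\ the fixed subgroup of $R$ under the combined character $\sgn_Y(\alpha)\psi(\alpha,\alpha)$ for $\alpha\in\pi_1(Y)=\pi_1(\Delta Y)$. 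For part (2), duality on $F=C(p,q)$ gives $H_{k+n}(F;R'_F)\cong H^0(F;R'_F\otimes\mathcal O_F)$; then, exactly as in the proof of \fullref{identification}, one uses the identity $\sgn_F(\beta)\sgn_M(\beta)=\sgn_N(f_\#\beta)\sgn_B(f_\#\beta)$, which in the present situation collapses because $p_\#\beta=q_\#\beta$ for $\beta\in\pi_1(C(p,q))$, so that $f_\#\beta=(p_\#\beta,p_\#\beta)$ sits on the diagonal. The resulting action on $R$ becomes (up to the orientation twist, which is absorbed into the diagonal action) the action $\psi(p_\#\beta,p_\#\beta)$, whose fixed subgroup is the desired $R_2$.

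For the particular system $R_N=\pi_n(Y,Y-y_0)\cong\mathbb Z[\pi]$, the action $\psi$ is the twisted conjugation $(\alpha,\beta)\cdot g = \sgn_Y(\alpha)\sgn_Y(\beta)\,\alpha g\beta^{-1}$, so on the diagonal one has $\psi(\alpha,\alpha)g=\alpha g\alpha^{-1}$; in the orientable case the fixed elements of $\mathbb Z[\pi]$ under this action are exactly the $\mathbb Z$--combinations of central elements (for part (1)) and of elements centralizing $p_\#\pi_1(C(p,q))$ (for part (2)), yielding the claimed direct sums. Parts (3) and (4) are then immediate: (3) is the content of the second assertion of \fullref{general} with $n-\ell=n$, and (4) follows by feeding the fundamental cohomology class (i.e.\ the twisted Thom class $\tilde\tau_Y$) into formula \eqref{general-formula}, which in the orientable case simplifies to the statement \eqref{gjw-formula}. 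The main obstacle I anticipate is bookkeeping: systematically tracking the three orientation characters $\sgn_M,\sgn_F,\sgn_N,\sgn_B$ together with the twist built into $\pi_n(Y,Y-y_0)$, and verifying that the collapses on the diagonal really do convert the abstract fixed-point description of $R_1,R_2$ into the clean ``center'' and ``centralizer'' statements.
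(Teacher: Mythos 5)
Your proposal is correct and follows essentially the same route as the paper: duality applied twice to compute $H^j(Y^{\times};R_{Y\times Y})$ and identify the $j=n$ group with a fixed submodule, duality on $F=C(p,q)$ for part (2), citation of \fullref{general} (with $n-\ell=n$) for part (3), and specialization to $R_N=\pi_n(Y,Y-y_0)$ for part (4). Your treatment is, if anything, slightly more explicit than the paper's about the orientation characters and the collapse of the action onto the diagonal.
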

\begin{proof} (1)\qua By duality, $H^j(Y^{\times}; R_{Y\times Y}) \cong H_{2n-j}(\Delta Y; R_{\Delta Y}')$. Since 
$\dim \Delta Y=n$, it follows that $H^j(Y^{\times};R_{\Delta Y})=0$ for $j<n$. When $j=n$, by duality again, we have $H_{n}(\Delta Y; R_{\Delta Y}')\cong H^0(\Delta Y; R_{\Delta Y})$ which is isomorphic to the submodule of $R_{\Delta Y}$ that is fixed by the action of $\psi$. This submodule in the case where $R_{Y\times Y}=\pi_n(Y,Y-y_0)$ is easily seen to be isomorphic to $\sum_\alpha \mathbb Z$ where $\alpha$ varies over the center of $\pi_1(Y)$. 

(2)\qua Note that $F=C(p,q)$ is a submanifold of dimension $k+n$. By duality, 
$H_{k+n}(F;R'_F) \cong H^0(F;R'_F\bigotimes \mathcal O_F)$. A similar argument as in (1) gives the result. 

(3)\qua This follows from \fullref{general}. 

(4)\qua The formula \eqref{general-formula} reduces to \eqref{gjw-formula} when $R_N=\pi_n(Y,Y-y_0)$.  
\end{proof} 

\begin{remark} The special cases above with local coefficients given by $\pi_n(Y,Y-y_0)$ are related to the study of obstructions in the root problem and in the general coincidence problem. While the root problem can sometimes be regarded as a special case of the coincidence problem, the formula \eqref{general-formula} in the root case as in \fullref{root} does not seem to follow from that in the general coincidence case as in \fullref{coin}.
\end{remark}

If the space $Y$ is a topological group, then by means of the map $h\co Y\times Y \to Y$ defined by $h(x,y)=xy^{-1}$, we obtain the formula for the root case from the general coincidence case as follows. 

\begin{proposition} \label{coin=root}
Given a topological group $Y$, a local coefficient system $R_Y$ and a map $f\co X \to Y$ which is transverse to the neutral element $e\in Y$, the general Gottlieb's formula for roots follows from that for the coincidence case.
\end{proposition}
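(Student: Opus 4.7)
The plan is to specialize the coincidence case of \fullref{general} to $p=f$ and $q=c_e$ (the constant map sending $X$ to the neutral element $e$), and then transfer data between the two target setups via the smooth map $h\co Y\times Y\to Y$, $h(x,y)=xy^{-1}$. The diffeomorphism $(x,y)\mapsto (xy^{-1},y)$ of $Y\times Y$ intertwines $h$ with projection onto the first factor and carries $\Delta Y$ to $\{e\}\times Y$, so $h$ is a submersion with $h^{-1}(e)=\Delta Y$. It follows that $(p\times q)\circ d=(f,c_e)$ is transverse to $\Delta Y$ iff $f=h\circ(f,c_e)$ is transverse to $e$, and that $C(p,q)=f^{-1}(e)=F$ with the same inclusion $i_1\co F\hookrightarrow X$ as in the root setup.

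Set $R_{Y\times Y}:=h^{*}R_Y$. Then the induced local systems on $X$ and on $F$ agree whether built from $R_Y$ via $f$ or from $R_{Y\times Y}$ via $(p\times q)\circ d$, since $((p\times q)\circ d)^{*}h^{*}=f^{*}$. For $r\in H^n(Y,Y{-}e;R_Y)$, apply the coincidence version of \eqref{general-formula} to the class $h^{*}r\in H^n(Y^{\times};R_{Y\times Y})$:
\begin{equation*}
i_{1_*}\phi_{\mathrm{coin}}(n)(h^{*}r) = \bigl((p\times q)\circ d\bigr)^{*}\bigl(j_2^{*}(h^{*}r)\bigr)\frown [z_X].
\end{equation*}
Naturality of $j_2^{*}$ under the pair map $h\co (Y\times Y,Y\times Y{-}\Delta Y)\to (Y,Y{-}e)$, together with the identity $h\circ(p\times q)\circ d = f$, collapses the right-hand side to $f^{*}(j_2^{*}r)\frown [z_X]$, which is the right-hand side of the root case of \eqref{general-formula}. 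Dually, \fullref{Define} gives $\phi_{\mathrm{coin}}(n)(h^{*}r)=A^{-1}(f^{*}r)=\phi_{\mathrm{root}}(n)(r)$, the duality isomorphism $A$ being intrinsic to $(X,X{-}F)$ equipped with the common local system $f^{*}R_Y\otimes \mathcal O_X$.

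The main obstacle is bookkeeping rather than geometry: one must verify the transversality equivalence between $(f,c_e)\pitchfork\Delta Y$ and $f\pitchfork e$, and then check that the coefficient systems on $F$ produced by the two routes coincide. Both reduce to the functoriality of pullback and to $h$ being a submersion whose fiber over $e$ is $\Delta Y$, so no essentially new computation arises beyond pulling the root data through $h$.
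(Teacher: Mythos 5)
Your proposal is correct and takes essentially the same route as the paper: both pull the root data back through $h(x,y)=xy^{-1}$ via the coefficient system $R_{Y\times Y}=h^{*}R_Y$ and the map $f\times\bar e$, apply the coincidence formula to $h^{*}r$, and collapse the right-hand side using $f=h\circ(f\times\bar e)$. Your explicit verification of the transversality equivalence and of the identification of the two $\phi$ homomorphisms merely fills in details the paper leaves implicit.
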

\begin{proof} Consider the local coefficient system $R_{Y\times Y}$ over 
$Y\times Y$ induced by $h$ from the system $R_Y$ and the map $f\times \bar e\co X \to Y\times Y$ where $\bar e$ denotes the constant map at $e$. It follows that $F=f^{-1}(1)=(f\times \bar e)^{-1}(Y\times Y)$. Now, for a given class $x\in H^j(Y,Y-e;R_Y)$, we apply the formula \eqref{gjw-formula} to $h^*(x)\in H^n(Y^{\times}; 
R_{Y\times Y})$. Then, we obtain  $$i_{1_*}\phi(j)(h^*(x))=(f\times \bar e)^{*}(j_2^*(h^*(x)))\frown [z_X].$$ 
But $(f\times \bar e)^{*}(j^*(h^*(x))\frown [z_X]=f^*(j^*(x))\frown [z_X]$ since $f=h \circ (f\times \bar e)$ and it is straightforward to see that 
$$i_{1_*}\phi(j)(h^*(x))= j_{2_*}\phi(j)((x)).\proved$$
\end{proof}

\begin{remark}
The hypothesis that $Y$ be a group in \fullref{coin=root} can be slightly relaxed by requiring $Y$ to possess a multiplication with inverse. For example, \fullref{coin=root} holds if $Y=S^7$, the unit Cayley numbers.
\end{remark} 

\section[(Co)homological estimates for preimages -- simple coefficients]{(Co)homological estimates for preimages -- \\simple coefficients}\label{sec4}

Let $R$ be a ring with unity.
Suppose $M$ and $N$ are $R$--orientable manifolds of dimension $m$ and $n$ respectively, and $B\subset N$ is an $R$--orientable submanifold of dimension $k$. We study the relationship between $H^{*+m-n+k}(g^{-1}(B);R)$  and $H^*(B;R)$ (similarly in homology between $H_{*+m+n-k}(g^{-1}(B);R)$ and $H_*(B;R)$ ) for trivial coefficient systems, where $g$ is an arbitrary map belonging to $[f]$, the homotopy class of a map $f\co M \to N$. In particular, this will give lower bounds for the minimal dimension (cohomological, homological)
of $g^{-1}(B)$ among all possible maps $g\in[f]$.

\subsection[Bounds for the cohomology of the preimage of B under f]{Bounds for the cohomology of $f^{-1}(B)$}

Here, we extend the results of \cite{wyler}. 
Cech cohomology will be used. Since $B$ is a $k$--dimensional submanifold, we identify the Cech cohomology $\check H^*(B;R)$ with the singular cohomology $H^*(B;R)$ of $B$.

\begin{theorem}\label{Cohom-dim}
Let $R_0$ be the image of the homomorphism  
$f_{*_n}$ from $H_n(M;R)$ to $H_n(N;R)$. 
Let $i_0$ be the first integer (if it exists)  such that the induced 
homomorphism in homology  $f_{*_i}\co  H_i(M;R)\to H_i(N,N{-}B;R)$ is nonzero. Then 
\begin{enumerate}
\renewcommand{\labelenumi}{(\alph{enumi})}
\item If $i_0<n$ then $\check H^{m-i_0}(g^{-1}(B);R))\ne 0$ for any map $g$ homotopic to $f$.
\item If  $f_{*_n}$ is nontrivial then for $n-k \leq j\leq n$ the image of the homomorphism $f_{*_j}\co H_j(M,\!M{-}g^{-1}(B);\!R)\to H_j(N,\!N{-}B;\!R)$ contains $R_0\cdot H_j(N,\!N{-}B;\!R)$   and the image of the composite   
\begin{multline*}\hfil\check H^{m-j}(g^{-1}(B);R)\to H_j(M,M-g^{-1}(B);R)\hfil\\\to H_j(N,N{-}B;R) \to H^{n-j}(B;R)\end{multline*}
contains 
$R_0\cdot H^{n-j}(B;R)$. 
\end{enumerate}

In particular, $\check H^{m-n}(g^{-1}(B);R)$ and $\check H^{m-n+k}(g^{-1}(B);R)$ are nontrivial.
\end{theorem}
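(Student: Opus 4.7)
The plan is to deduce everything from Spanier's Alexander--Lefschetz duality together with the naturality of the cap product for the induced pair map $g\co (M,M{-}g^{-1}(B))\to (N,N{-}B)$ (any $g\simeq f$). The preliminary ingredients are the duality isomorphisms of Spanier's Theorem~6.4: $\check H^{m-j}(g^{-1}(B);R)\cong H_j(M,M{-}g^{-1}(B);R)$ for the compact subset $g^{-1}(B)$ of the $R$-orientable $m$-manifold $M$, and $H^{n-j}(B;R)\cong H_j(N,N{-}B;R)$ for the $R$-orientable submanifold $B\subset N$. Under these identifications the composite in part~(b) becomes $g_*\co H_j(M,M{-}g^{-1}(B);R)\to H_j(N,N{-}B;R)$ flanked by the dualities, and since $g\simeq f$, the homotopy-invariant homomorphism $f_{*_i}\co H_i(M;R)\to H_i(N,N{-}B;R)$ automatically factors through $H_i(M,M{-}g^{-1}(B);R)$.

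For part~(a), I would read off the conclusion from that factorization: $f_{*_{i_0}}\ne 0$ forces the intermediate group $H_{i_0}(M,M{-}g^{-1}(B);R)$ to be nonzero, and Alexander duality then yields $\check H^{m-i_0}(g^{-1}(B);R)\ne 0$. For part~(b), I would first use standard transversality to replace $g$ by a transverse representative in $[f]$; then $g^{-1}(B)$ is a closed $R$-orientable submanifold of $M$ of dimension $m{-}n{+}k$, and the pullback $g^*\tau_B$ of the Thom class $\tau_B\in H^{n-k}(N,N{-}B;R)$ of $B$ is the Thom class of $g^{-1}(B)\subset M$. Capping with these Thom classes produces a commutative diagram identifying $g_*\co H_j(M,M{-}g^{-1}(B))\to H_j(N,N{-}B)$ with the restriction $\bar g_*\co H_{j-(n-k)}(g^{-1}(B))\to H_{j-(n-k)}(B)$, and Poincar\'e duality in the closed oriented manifolds $B$ and $g^{-1}(B)$ then converts that map into the Gysin transfer $\bar g^!\co H^{m-j}(g^{-1}(B);R)\to H^{n-j}(B;R)$. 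For each $\xi\in H^{m-n}(M;R)$ whose Gysin image $f^!\xi$ realizes a prescribed $r\in R_0\subset H^0(N;R)=R$, base change $\bar g^!\, i^*=i_B^*\, f^!$ together with the projection formula $\bar g^!(\bar g^*\alpha\cup\eta)=\alpha\cup\bar g^!\eta$ give $\bar g^!(\bar g^*\alpha\cup i^*\xi)=r\alpha$ for every $\alpha\in H^{n-j}(B;R)$. Hence $R_0\cdot H^{n-j}(B;R)$ lies in the image of $\bar g^!$, and translating back through Thom and Poincar\'e duality places $R_0\cdot H_j(N,N{-}B;R)$ inside the image of $g_*$; the composite assertion now follows at once.

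For the ``In particular'' claims I would specialize. At $j=n$, $H^0(B;R)$ contains the unit, so the image of the composite contains $R_0\cdot\mathbf 1_B\ne 0$, forcing $\check H^{m-n}(g^{-1}(B);R)\ne 0$. At $j=n{-}k$, the group $H^k(B;R)\cong R$ is generated by the cohomology fundamental class of the closed oriented $k$-manifold $B$, so the image contains $R_0\ne 0$, forcing $\check H^{m-n+k}(g^{-1}(B);R)\ne 0$. The principal obstacle is the projection-formula step of part~(b): the identification of $g^*\tau_B$ as a bona fide Thom class depends on transversality. Promoting the full image statement from the chosen transverse representative to an arbitrary $g\in [f]$---since the group $\check H^{m-j}(g^{-1}(B);R)$ genuinely varies with $g$---has to be handled separately, for instance by tracking the argument along a transversal homotopy or by working with $g^*\tau_B$ as a relative class in $H^{n-k}(M,M{-}g^{-1}(B);R)$ directly and invoking Alexander duality at the Cech level.
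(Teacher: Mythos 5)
Part (a) of your argument and the two ``in particular'' computations ($H_{n-k}(N,N{-}B;R)\cong H^k(B;R)\cong R$ and $H_n(N,N{-}B;R)\cong H^0(B;R)\cong R$) are correct and coincide with the paper's proof. The problem is part (b). You route the argument through transversality: replace $g$ by a transverse representative, identify $g^*\tau_B$ with the Thom class of the submanifold $g^{-1}(B)\subset M$, pass to the Gysin transfer $\bar g^!$ between the closed manifolds $g^{-1}(B)$ and $B$, and apply the projection formula there. This establishes the conclusion only for the transverse representative, whereas the theorem asserts it for \emph{every} $g$ homotopic to $f$ --- which is the entire point of the statement (it is a lower bound on the cohomological size of $g^{-1}(B)$ over the whole homotopy class, and is the reason \v{C}ech cohomology appears: $g^{-1}(B)$ is in general just a compact set). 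You flag this yourself at the end, but neither of your proposed repairs is carried out, and the first one (``tracking the argument along a transversal homotopy'') does not obviously work: for $g\simeq g'$ the pairs $(M,M{-}g^{-1}(B))$ and $(M,M{-}g'^{-1}(B))$ are not related by any natural map, so the images of the two composites cannot be directly compared. So as written there is a genuine gap.

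The fix is to drop transversality altogether, which is what the paper does and what your second suggested repair gestures at. Alexander duality identifies $H_j(N,N{-}B;R)$ with $H^{n-j}(B;R)$ via a cap product with the fundamental class $\mu_N$, so any $r_0\cdot\alpha$ with $\alpha\in H_j(N,N{-}B;R)$ and $r_0\in R_0$ can be written as $\beta\frown(r_0\cdot\mu_N)$ with $\beta\in H^{n-j}(B;R)$. Writing $r_0\cdot\mu_N=f_*(\psi)$ and invoking naturality of the cap product for the map of pairs $(M,M{-}g^{-1}(B))\to(N,N{-}B)$ gives
$$r_0\cdot\alpha=\beta\frown f_*(\psi)=f_*\bigl((f^*\beta)\frown\psi\bigr),$$
so $R_0\cdot H_j(N,N{-}B;R)$ lies in the image of $f_{*_j}$ on relative homology for an arbitrary continuous $g\in[f]$, with no smooth structure or manifold structure on $g^{-1}(B)$ required. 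The composite statement and the nonvanishing of $\check H^{m-j}(g^{-1}(B);R)$ for $j=n$ and $j=n-k$ then follow from Alexander--\v{C}ech duality over the compact set $g^{-1}(B)$, exactly as in your part (a). Your Gysin-transfer picture is a correct and illuminating description of what happens when $g$ \emph{is} transverse, but it cannot serve as the proof of the stated theorem without the (nontrivial, and unsupplied) promotion step.
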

\begin{proof} 
(a)\qua First we have a commutative diagram
\begin{equation}\label{exact}
\begin{CD}
    { H_i(M;R) }   @>{f_*}>>   H_i(N;R)    \\
    @V{j_1}_*VV  @V{j_2}_*VV   \\
    {H_i(M,M-g^{-1}(B);R)}    @>{f_*}>>  H_i(N,N{-}B;R)
\end{CD}
\end{equation}
where $j_1, j_2$ are inclusions.

Since the composite $H_{i_0}(M;R) \to H_{i_0}(M,M-g^{-1}(B);R) \to H_{i_0}(N,N{-}B;R)$ is nonzero, it follows that $H_{i_0}(M,M-g^{-1}(B);R)$ is nontrivial. Now applying Poincar\'e Duality as in \cite{dold} 
using the fact that $g^{-1}(B)$ is compact, it follows that $\check H^{m-i_0}(g^{-1}(B);R)\ne 0$.

(b)\qua Given an element $\alpha \in H_j(N,N{-}B;R)$ and $r_0 \in R_0$, the element $r_0\cdot \alpha$ is the cap product of some class $\beta\in H^{n-j}(B;R)$ with the multiple $r_0 \cdot \mu_N$ of the homology fundamental class $\mu_N\in H_n(N;R)$. Using the naturality of the cap product 
\cite[p.239]{dold} and the fact that $r_0 \cdot \mu_N$ is in the image of $f_{*_n}$, we have 
$$r_0\cdot \alpha=\beta\frown r_0 \cdot \mu_N=\beta \frown (f_*(\psi))=f_*((f^*\beta)\frown \psi).$$ 
For $j=n-k$,
we have $H_{n-k}(N,N{-}B;R)$ isomorphic to $H^k(B;R)$ which  is isomorphic to
$R$. For $j=n$, we have $H_{n}(N,N{-}B;R)$ isomorphic to $H^0(B;R)$ which  is isomorphic to
$R$. In either case, $R_0 \cdot H^{n-j}(B;R)$ is nontrivial.
\end{proof} 

\begin{remark} In \fullref{Cohom-dim}, for any closed connected $B$, the image of 
$$f_{*_i}\co  H_i(M,M-g^{-1}(B);R) \to H_i(N,N{-}B;R)$$
is equal to $R_0 \cdot H_i(N,N{-}B;R)$. Furthermore, part (b) is similar to the result of A\,Wyler \cite{wyler} in that we obtain nontriviality of $f_{*_i}$ for all $i$ given the nontriviality of $f_{*_n}$.
\end{remark}

\subsection[Bounds for the homology of the preimage of B under f]{Bounds for the homology of $f^{-1}(B)$}

Next, we estimate the 
homology of $g^{-1}(B)$ for any $g$ in the homotopy class $[f]$ of $f$. We use Steenrod homology,  singular cohomology and Alexander--Spanier cohomology, denoted by $H_*$, $H^*$ and $\bar H^*$ respectively.
Our proof uses the same ingredients as in the last subsection. 

\begin{theorem}\label{Hom-dim}
Let $i_0$ be the first integer (if it exists) such that the induced 
homomorphism in cohomology $f^{\smash{{*^{i_0}}}}\co H^{i_0}(N,N{-}B;R)\to H^{i_0}(M;R)$ is a nonzero homomorphism. Then
\begin{enumerate}
\renewcommand{\labelenumi}{(\alph{enumi})}
\item If $i_0<n$ then $H_{m-i_0}(g^{-1}(B);R)\ne 0$ for any map $g$ homotopic to $f$.
\item If  $f^{*^n}\co H^n(N;R)\to H^n(M;R)$ is nontrivial then $f^{*^{n-k}}$ is also different from zero and $H_{m-n+k}(g^{-1}(B);R)\ne 0$.
\end{enumerate}
\end{theorem}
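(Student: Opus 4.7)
The plan is to mirror the proof of \fullref{Cohom-dim}, trading homology for cohomology and invoking Steenrod--Sitnikov duality in place of \v{C}ech cohomology; throughout this subsection, $H_{*}$ denotes Steenrod homology as fixed at the start of \fullref{sec4}. For part (a), I would fix any map $g \simeq f$ and consider the commutative square arising from the pair inclusions $j_1\colon (M,\emptyset) \hookrightarrow (M, M-g^{-1}(B))$ and $j_2\colon (N,\emptyset) \hookrightarrow (N, N-B)$:
\[
\begin{CD}
H^{i_0}(N, N{-}B; R) @>{g^*}>> H^{i_0}(M, M{-}g^{-1}(B); R) \\
@V{j_2^*}VV @V{j_1^*}VV \\
H^{i_0}(N; R) @>{g^*}>> H^{i_0}(M; R)
\end{CD}
\]
Homotopy invariance makes the bottom $g^*$ equal to $f^*$, so the diagonal composite $j_1^* g^* = g^* j_2^*$ is precisely the hypothesized nonzero map $f^{*^{i_0}}$. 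Consequently the top row $g^*$ cannot vanish, forcing $H^{i_0}(M, M{-}g^{-1}(B); R) \neq 0$. Since $g^{-1}(B)$ is a closed subspace of the compact $R$--orientable $m$--manifold $M$, Poincar\'e--Lefschetz duality in its Steenrod form gives $H^{i_0}(M, M{-}g^{-1}(B); R) \cong H_{m-i_0}(g^{-1}(B); R)$, and the desired nonvanishing follows.

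For part (b), write $\mu_N \in H^n(N; R)$ for the cohomology fundamental class; the hypothesis $f^*\mu_N \neq 0$ is preserved under any homotopy, so it persists when $f$ is replaced by a map $g \simeq f$. Select a transverse representative $g$ of $[f]$ and a point $y_0 \in B$, and set $F' := g^{-1}(y_0) \subset F := g^{-1}(B)$. Gottlieb's formula \eqref{gottlieb} yields $g^*\mu_N \frown [z_M] = i_*[z_{F'}]$, which is nonzero, so $F'$ and hence $F$ are nonempty. Transversality presents $F$ as a closed submanifold of $M$ of dimension $m-n+k$; its normal bundle $\nu_F = g^*\nu_B$ inherits $R$--orientability from $\nu_B$, and together with the $R$--orientability of $M$ this forces $F$ to be $R$--orientable. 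The fundamental class $[z_F]$ is therefore a nonzero element of $H_{m-n+k}(F; R)$. The intermediate claim that $f^{*^{n-k}}$ is nonzero is witnessed by the pullback $f^*\colon H^{n-k}(N, N{-}B; R) \to H^{n-k}(M, M{-}F; R)$ sending the Thom class $u_B$ to $u_F$, and $u_F \neq 0$ because $F$ is a nonempty $R$--orientable submanifold.

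The main obstacle is the duality step in (a): for an arbitrary (possibly non-transverse) $g$ in the homotopy class of $f$, the preimage $g^{-1}(B)$ need not be a manifold or even an ANR, so the standard singular Poincar\'e--Lefschetz duality does not apply verbatim. The resolution is the Steenrod--Sitnikov homology introduced at the start of this subsection, which is engineered so that $H^j(M, M-C; R) \cong H_{m-j}(C; R)$ holds for every closed subset $C$ of a closed $R$--orientable $m$--manifold $M$. A secondary care point in (b) is that Gottlieb's formula is applied to a transverse representative of $[f]$; the statement for an arbitrary $g$ in the homotopy class then follows from the argument of (a) applied in degree $n-k$, now relying on the intermediate observation that $f^{*^{n-k}}$ is nonzero.
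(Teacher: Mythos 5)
Part (a) of your proposal coincides with the paper's argument: the same commutative square built from the pair inclusions $j_1, j_2$, homotopy invariance of the composite into $H^{i_0}(M;R)$, and duality for the compact subset $g^{-1}(B)$ (the paper invokes Alexander--Spanier duality with Steenrod homology rather than "Steenrod--Sitnikov duality", but this is the same device). No complaints there.

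Part (b) is where you diverge from the paper, and where there is a genuine gap. Your argument (transverse representative, Gottlieb's formula at a point $y_0\in B$, Thom class of $F$) only produces conclusions about the \emph{transverse} representative, and your proposed bridge to an arbitrary $g\in[f]$ does not work. The bridge requires running part (a) in degree $n-k$, which needs the composite $H^{n-k}(N,N{-}B;R)\to H^{n-k}(M;R)$ into the \emph{homotopy-invariant} group to be nonzero. What you actually establish is that $u_B\mapsto u_F\ne 0$ in $H^{n-k}(M,M{-}F;R)$; the further restriction $j_1^*(u_F)\in H^{n-k}(M;R)$ is Poincar\'e dual to $i_{1_*}[z_F]$ and vanishes whenever $F$ is null-homologous in $M$ (eg $M=N=S^3$, $B$ an unknotted circle, $f=\mathrm{id}$: here $f^{*^n}$ is the identity, $u_F\ne 0$, yet $H^{2}(S^3;R)=0$ so the composite into $H^{2}(M;R)$ is identically zero). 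Since $H^{n-k}(M,M{-}g^{-1}(B);R)$ is not homotopy invariant, knowing it is nonzero for one transverse $g$ says nothing about other maps in the class, and the conclusion $H_{m-n+k}(g^{-1}(B);R)\ne 0$ for arbitrary $g$ is left unproved. The paper avoids this entirely: it dualizes the hypothesis to get $f_{*_n}\ne 0$, chooses $\psi\in H_n(M;R)$ with $f_*\psi=r_0\mu_N$, and uses the naturality of the cap product, $x'\frown r_0\mu_N=f_*\bigl((f^*x')\frown\psi\bigr)$, to force $f^*(x')\ne 0$ in $H^{n-k}(M,M{-}g^{-1}(B);R)$ for \emph{every} $g\simeq f$ simultaneously (this is Dold's transfer in disguise); the duality step of (a) then gives the homology statement. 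A secondary issue: Gottlieb's formula \eqref{gottlieb} is stated for a \emph{regular value} $y_0$, and transversality of $g$ to $B$ does not make points of $B$ regular values of $g$; this is repairable (if $F=\emptyset$ every point of $B$ is vacuously regular and Gottlieb gives a contradiction; otherwise choose $y_0$ a regular value of $g|_F$), but it needs to be said. To fix part (b) you should replace the transversality argument by the cap-product naturality argument, or at minimum supply a homotopy-invariant witness in degree $n-k$.
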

\begin{proof} 
Note that $H^i(N,N{-}B; R)$ is
zero for $i<n-k$ and for $i=n-k$ it is isomorphic to $H_k(B;R)\approx R$.

(a)\qua First we have a commutative diagram where the cohomology here is Alexander--Spanier cohomology.
\begin{equation}\label{exact'}
\begin{CD}
    {\bar H^i(N,N{-}B;R) }  @>{f^*}>>   \bar H^i(M,M-g^{-1}(B);R)    \\
    @V{j_2}^*VV  @V{j_1}^*VV   \\
    {\bar H^i(N;R)}    @>{f^*}>>  \bar H^i(M;R)
\end{CD}
\end{equation}
Since $M,N$ are compact and $B$ is a submanifold, we can identify $\bar H^i(N,N{-}B;R)$ and $H^i(M;R)$, the Alexander--Spanier cohomology, with their respective singular cohomology groups. Since the composite $$\bar H^{i_0}(N,N{-}B;R)\to \bar H^{i_0}(M,M-g^{-1}(B);R) \to \bar H^{i_0}(M;R)$$ is nonzero, it follows that $\bar H^{i_0}(M,M-g^{-1}(B);R)$ is nontrivial. Now applying Alexander--Spanier duality \cite[Theorem 11.15]{Massey} and using the fact that $g^{-1}(B)$ is compact, $H_{m-i_0}(g^{-1}(B);R)\ne 0$ and part (a) follows.

(b)\qua Using duality, the hypothesis on $f^{*^n}$ implies that the homomorphism $f_{*_n}$ from $H_n(M;R)$ to $H_n(N;R)$ is nontrivial. Again, we let $R_0=f_{*_n}(H_n(M;R))$ be the subring of $R$. Therefore, $r_0 \cdot \mu_N$ is in the image
of $f_{*_n}$ for any $r_0\in R_0$. As in the previous case, we use again the formula from \cite[p.239]{dold} for the naturality as follows.
Given an element $x'\in H^i(N,N{-}B;R)$, suppose that there exists $r_0\in R_0$ such that $r_0\cdot x'\ne 0$.
Thus 
$$r_0 \cdot x' \frown \mu_N=x'\frown r_0\cdot \mu_N=x' \frown f_{*n}(\beta)=f_*((f^*(x'))\frown \psi),$$ 
which then implies that $f^{*^i}(x')\ne 0$.
It follows that $f^{*^{n-k}}$ is nontrivial since we have $H^{n-k}(N,N{-}B;R)\approx R$. \end{proof} 

One natural way to use the results of the two subsections above is to find suitable coefficients $R$ such that our hypotheses hold. It may happen that for some ring the hypotheses do not hold but do hold for another ring. In the next section, we study the same situations as in this section when local coefficients are used. 
 
\begin{remark} Similar results can be obtained for $B$ a closed connected subset of $N$, using the same techniques above. However, one must use a suitable (co)homology in order to apply duality \cite[Theorems 11.11 and  11.15]{Massey} and replace $k=\dim B$ by the (co)homology dimension of the set $B$.
\end{remark}

\subsection{Roots and coincidences}\label{sec4.3}

We focus on two special cases, namely (i) when $B=\{b\}$ is a point (root case) and (ii) when $N=Y\times Y$, $B=\Delta Y$ and $f=p\times q$ where $p,q\co M\to Y$ with $\dim N=2n, \dim Y=n$ and $m\ge n$.

In situation (i), the homomorphism
$$
f_{*_i}\co H_i(M;R) \to H_i(N,N-b;R)$$
is always zero except possibly for $i=n$. Thus, \fullref{Cohom-dim} reduces to asserting that $H^{m-n}(g^{-1}(b);R)\ne 0$. Similarly, \fullref{Hom-dim} reduces to asserting that $\check H_{m-n}(g^{-1}(b);R)\ne 0$.

In fact, a similar phenomenon occurs in the coincidence case (ii) when simple coefficients are used.

Let $\tau_Y \in H^n(Y^{\times};R)$ be the Thom class of the normal bundle of the diagonal $\Delta Y$ in $Y\times Y$ where $Y^{\times}$ denotes the pair $(Y\times Y, Y\times Y -\Delta Y)$. Then the Thom Isomorphism Theorem asserts that every class $\beta \in H^{n+i}(Y^{\times};R)$ is given by $\beta =\tau_Y \smile {p_1}^*(\alpha)$ for some $\alpha \in H^i(Y;R)$ where $p_1\co Y\times Y \to Y$ denotes the projection on the first factor. Now, suppose the homomorphism
$$
H^n(Y^{\times};R) \stackrel{f^{*}}{\longrightarrow} H^n(M;R)$$
is trivial then for any $z\in H^{n+i}(Y^{\times};R)$, $z=\tau_Y\smile p_1^*(z')$ for some $z'\in H^i(Y;R)$. Now, $f^*(z)=f^*(\tau_Y) \smile f^*(z')=0$. For dimension reasons, $H^i(Y^{\times};R)=0$ for $i<n$. Hence, the first dimension $i$ at which 
$f^i\co H^*(Y^{\times};R) \to H^*(M;R)$ can possibly be nonzero is $*=n$. 

\begin{remark} The occurrence of the first nontrivial homomorphism $f^i$ at $i=n$ does not always hold true for arbitrary $B$ (other than the root or the coincidence cases). See for instance \fullref{nonprimary} even when one uses trivial coefficients.
\end{remark} 

\section[(Co)homological estimates for preimages -- local coefficients]{(Co)homological estimates for preimages -- \\local coefficients}\label{sec5}

In this section, we no longer require $M$, $N$ and $B$ to be orientable for we will use local coefficients.

Recall that in \fullref{gjw-ex}, the homomorphism $f^*$ is zero with trivial coefficients (integral coefficients) but nonzero for some local coefficients. In fact, using the universal coefficient theorem, the homomorphism $f^*$ is zero for any abelian group $G$ as simple coefficients. Although our results are stated in terms of arbitrary local coefficient systems, certain coefficient systems play an important role since they are 
closely related with the primary obstruction to deforming a map out of a subspace \cite{dob2}. 

Let us recall from Dobre{\'n}ko \cite{dob} a few useful results. We have that the pair $(N, N{-}B)$ is $(n-k-1)$--connected. The local system over $N$ given by the group $\pi_{n-k}(N,N{-}B)$ is computed as follows. 

\begin{theorem}{\rm \cite{dob}}\qua \label{dobrenko1}
Let $i\co B\to N$ be the inclusion and $\pi^0_1(B,b)$ be the subgroup of the elements of $\pi_1(B,b)$ which preserve the orientation of $B$. Then
\begin{enumerate}
\item The groups $\pi_j(N,N{-}B)$ vanish for $0\leq j < \dim N-\dim B=n-k$.
\item If $\mathrm{Ker} (i_{\#})\subset \pi_1^0(B,b)$ then $\pi_{n-k}(N,N{-}B)$ is isomorphic to a sum of $\mathbb Z$ indexed in $W(b)$, where $W(b)=\pi_1(N,b)/\mathrm{Im}(i_{\#}))$ (left cosets).
\item Otherwise, $\pi_{n-k}(N,N{-}B)$ is isomorphic to a sum of $\mathbb Z_2$ indexed in $W(b)$, where $W(b)=\pi_1(N,b)/\mathrm{Im}(i_{\#})$.
\item The action  of $\pi_1(N{-}B)$ on $\pi_{n-k}(N, N{-}B)$ is given by
$$\beta[\alpha]=\sgn_{N}(\beta)[\beta*\alpha]$$
for $\beta\in \pi_1(Y)=\pi_1(Y-B)$ and $\alpha\in  \pi_{n-k}(N, N{-}B)$.
\end{enumerate}
\end{theorem}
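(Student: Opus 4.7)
The plan is to reduce everything to the normal bundle of $B$ in $N$ and then pass to the universal cover so that the components of the preimage of $B$ are indexed naturally by $W(b)$. First, choose a tubular neighborhood $U \supset B$ of $B$ in $N$ with projection $r \colon U \to B$ realizing $U$ as (the total space of) the open normal disk bundle of $B$ in $N$. Excision together with a deformation retraction of $N-B$ onto $U-B$ gives the homotopy equivalence of pairs $(N, N-B) \simeq (U, U-B)$. Each fiber of this pair is $(\mathbb{R}^{n-k}, \mathbb{R}^{n-k} - 0)$, which is $(n-k-1)$-connected. Applying the long exact sequence of the pair to the fiber bundle (or more directly, a Blakers--Massey/excision argument along $B$) yields part (1): $\pi_j(N,N-B)=0$ for $j<n-k$.

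For parts (2) and (3), fix $n-k\ge 2$ (the case $n-k=1$ being handled separately), and lift to the universal cover $p\colon \tilde N \to N$. Because the pair $(N,N-B)$ is simply connected at the base point (as $\pi_0$ and $\pi_1$ of the fiber pair are suitably trivial), one has $\pi_{n-k}(N,N-B) \cong \pi_{n-k}(\tilde N, \tilde N - p^{-1}(B))$. The preimage $p^{-1}(B)$ is a disjoint union of components naturally parametrized by the left cosets in $W(b) = \pi_1(N,b)/\mathrm{Im}(i_\#)$; each component $\tilde B_w$ is the covering of $B$ classified by $\ker(i_\#)\subset \pi_1(B,b)$. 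Applying the tubular neighborhood argument componentwise and using excision,
\[
\pi_{n-k}(\tilde N, \tilde N - p^{-1}(B)) \;\cong\; \bigoplus_{[w]\in W(b)} \pi_{n-k}(U_w, U_w - \tilde B_w),
\]
where $U_w$ is a tubular neighborhood of $\tilde B_w$ in $\tilde N$.

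Each summand is the $\pi_{n-k}$ of the pair (disk bundle, disk bundle minus zero section) of the normal bundle of $\tilde B_w$, which by the Thom construction is $\mathbb{Z}$ when that normal bundle is orientable and $\mathbb{Z}_2$ otherwise. Since $\tilde N$ is simply connected and hence orientable, the normal bundle of $\tilde B_w$ is orientable exactly when $\tilde B_w$ itself is orientable, and from the covering theory of orientations this happens exactly when the pullback of $w_1(B)$ to $\tilde B_w$ vanishes, i.e.\ when $\ker(i_\#)\subset \pi_1^0(B,b)$. This yields the dichotomy between (2) and (3).

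For part (4), the action of $\pi_1(N-B)$ on $\pi_{n-k}(N,N-B)$ is the deck transformation action on the universal cover, translated through the isomorphism above. A loop $\beta\in\pi_1(N-B)=\pi_1(N)$ sends the component indexed by $[w]$ to the component indexed by $[\beta * w]$, accounting for the $[\beta*\alpha]$ term. The residual sign, acting on the $\mathbb{Z}$-summand by $\pm 1$, measures how the chosen generator (a small transverse disk in the normal bundle) is reoriented after deck translation. Since $TN|_B = TB\oplus \nu$, the orientation characters satisfy $w_1(\nu)+w_1(B)=w_1(N)|_B$; the $w_1(B)$ contribution is already absorbed in passing to the $\ker(i_\#)$-cover of $B$ (it is what forced the $\mathbb{Z}$ vs.\ $\mathbb{Z}_2$ split), so the remaining sign is precisely $\mathrm{sgn}_N(\beta)$. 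The main obstacle I expect is this last identification: untangling exactly which orientation characters contribute to which factor in the formula, and in particular verifying that the $w_1(B)$ contribution has been fully accounted for by the component decomposition so that only $\mathrm{sgn}_N(\beta)$ remains in the action formula.
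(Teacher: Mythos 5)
First, note that the paper does not prove this statement: it is quoted verbatim from Dobre\'nko's paper \cite{dob}, so your argument has to stand on its own. Your overall architecture (tubular neighborhood, pass to the universal cover $p\colon\tilde N\to N$, index the components $\tilde B_w$ of $p^{-1}(B)$ by $W(b)$, and read off the $\mathbb Z$ versus $\mathbb Z_2$ dichotomy from the orientability of the normal bundle of $\tilde B_w$, equivalently from $w_1(B)|_{\ker i_\#}$) is the standard and correct route, and your identification $\pi_1(\tilde B_w)\cong\ker(i_\#)$ together with $w_1(\nu)=w_1(N)|_B+w_1(B)$ is exactly the right mechanism.

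There is, however, a genuine gap in how you produce the $\mathbb Z_2$ summands. The local homotopy group $\pi_{n-k}\bigl(D(\nu_w),S(\nu_w)\bigr)$ of a disk-bundle pair over a connected base is always infinite cyclic, generated by a fiber $(D^{n-k},\partial D^{n-k})$ -- it is \emph{never} $\mathbb Z_2$. (Test case: the nonorientable $\mathbb R^2$-bundle over $S^1$ has sphere bundle a Klein bottle $K$, and the exact sequence of the pair gives $\pi_2\cong\ker(\pi_1(K)\to\pi_1(S^1))\cong\mathbb Z$.) So the claimed decomposition of $\pi_{n-k}(\tilde N,\tilde N-p^{-1}(B))$ as a direct sum of local \emph{homotopy} groups, each computed ``by the Thom construction,'' would yield $\bigoplus_W\mathbb Z$ in all cases and contradict part (3); moreover homotopy excision only gives a surjection in this borderline degree, not an isomorphism onto a direct sum. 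The torsion in case (3) comes from the Hurewicz quotient, not from the local pairs. The correct order of operations is: for codimension $n-k\ge 3$ the space $\tilde N-p^{-1}(B)$ is simply connected, so the relative Hurewicz theorem gives $\pi_{n-k}(\tilde N,\tilde N-p^{-1}(B))\cong H_{n-k}(\tilde N,\tilde N-p^{-1}(B))$; then ordinary homological excision splits this as $\bigoplus_w H_{n-k}(D(\nu_w),S(\nu_w))$, and the Thom isomorphism with local coefficients identifies each summand with $H_0(\tilde B_w;\mathbb Z^{w_1(\nu_w)})$, i.e.\ the coinvariants of $\mathbb Z$ under the sign action of $\ker(i_\#)$, which is $\mathbb Z$ or $\mathbb Z_2$ exactly as in (2)--(3). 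Two smaller points: the asserted homotopy equivalence of pairs $(N,N-B)\simeq(U,U-B)$ is false above degree $n-k$ (for part (1) a transversality/general position argument is cleaner and sufficient), and in codimension $2$ the identification $\pi_1(N-B)=\pi_1(N)$ used in part (4), as well as the simple connectivity of $\tilde N-p^{-1}(B)$, fail because of meridian loops, so that case needs separate treatment rather than just $n-k=1$. Your discussion of the sign in part (4) is the right bookkeeping once the group itself is correctly identified.
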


From the above result we can easily compute the first homology and cohomology of the pair $(N,N{-}B)$.

\begin{theorem}{\rm \cite{dob}}\qua\label{dobrenko2}
Using the local coefficient system $V_1 :=\pi_{n-k}(N,N{-}B)$ given by \fullref{dobrenko1},
\begin {enumerate}
\item $H^{n-k}(N,N{-}B;\pi_{n-k}(N,N{-}B))$ is equal to the direct sum of $R$ (for $R$ either $\mathbb Z$ or $\mathbb Z_2$) indexed over the set of elements of $\pi_1(N,b)/\mathrm{Im}(i_{\#})$ fixed by all elements of $\pi_1(B)$;
\item $H_{n-k}(N,N{-}B;\pi_{n-k}(Y,Y-B))$ is equal to the quotient of  $\pi_{n-k}(N,N{-}B)$ by the action of $\pi_1(B)$ (induced from the action of $\pi_1(N)$).
\end{enumerate}
\end{theorem}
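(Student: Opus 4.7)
The plan is to exploit the fact, from \fullref{dobrenko1}(1), that the pair $(N, N{-}B)$ is $(n-k-1)$--connected, so that $H_j(N, N{-}B;\cdot)$ vanishes for $j < n-k$ and the first potentially nontrivial (co)homology sits in degree $n-k$. The module $V_1 = \pi_{n-k}(N, N{-}B)$ has already been pinned down explicitly in parts (2)--(4) of \fullref{dobrenko1}: as a $\mathbb{Z}[\pi_1(N)]$--module it is a direct sum of copies of $R$ (with $R = \mathbb{Z}$ or $\mathbb{Z}_2$) indexed by the cosets $W(b) = \pi_1(N,b)/\mathrm{Im}(i_\#)$, with $\pi_1(N)$ permuting the coset basis and twisting by $\sgn_N$. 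Equivalently, $V_1$ is the induced module $\mathrm{Ind}_{\mathrm{Im}(i_\#)}^{\pi_1(N)} R_\sigma$, where $R_\sigma$ carries the sign representation $\sgn_N|_{\mathrm{Im}(i_\#)}$.

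For part (2), I would pass to the universal cover $\tilde N \to N$ (using $\pi_1(N{-}B) = \pi_1(N)$ from \fullref{dobrenko1}(4)) and apply the absolute Hurewicz theorem to the $(n-k-1)$--connected pair $(\tilde N, \widetilde{N{-}B})$. This gives an isomorphism of $\mathbb{Z}[\pi_1(N)]$--modules $H_{n-k}(\tilde N, \widetilde{N{-}B}; \mathbb{Z}) \cong V_1$, from which tensoring down yields
\[
H_{n-k}(N, N{-}B; V) \;\cong\; V_1 \otimes_{\mathbb{Z}[\pi_1(N)]} V
\]
for any local system $V$ on $N$. Specializing to $V = V_1$ and applying Frobenius reciprocity to the induced module $V_1$ collapses the right-hand side to $R_\sigma \otimes_{\mathbb{Z}[\mathrm{Im}(i_\#)]} V_1$; the two sign twists (one from the definition of $R_\sigma$, one from the $\pi_1(N)$--action on $V_1$) come in pairs and cancel, leaving the plain $\mathrm{Im}(i_\#)$--coinvariants of $V_1$. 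Since $\mathrm{Im}(i_\#)$ is the image of $\pi_1(B)$ under $i_\#$, this is exactly the quotient of $V_1$ by the $\pi_1(B)$--action asserted in (2).

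For part (1), I would argue dually. The cellular cochain complex on the universal cover computes $H^{n-k}(N, N{-}B; V_1)$, and because homology vanishes below $n-k$ the relevant portion of the complex collapses to
\[
H^{n-k}(N, N{-}B; V_1) \;\cong\; \mathrm{Hom}_{\mathbb{Z}[\pi_1(N)]}(V_1, V_1).
\]
A $\pi_1(N)$--equivariant endomorphism of the induced module $V_1$ is determined by its value on the identity-coset generator, and that value must be $\mathrm{Im}(i_\#)$--fixed; decomposing $V_1$ along $\pi_1(B)$--orbits of $W(b)$ and keeping only the singleton (i.e.\ $\pi_1(B)$--fixed) orbits exhibits this $\mathrm{Hom}$ space as the claimed direct sum of copies of $R$ indexed by fixed cosets.

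The main obstacle I anticipate is bookkeeping of sign twists. Three twists must be reconciled: the $\sgn_N$ built into the $\pi_1(N)$--action on $V_1$ from \fullref{dobrenko1}(4); the dichotomy $R = \mathbb{Z}$ versus $R = \mathbb{Z}_2$ governed by whether $\mathrm{Ker}(i_\#) \subset \pi_1^0(B,b)$; and, if one instead takes a tubular-neighborhood / Thom-isomorphism route, the orientation local system of the normal bundle $\nu$ with $\sgn_\nu = (\sgn_N|_B)\cdot \sgn_B$. The statement of \fullref{dobrenko1} has been arranged so that these twists cancel pairwise when one specializes to $V = V_1$, and verifying that cancellation is the real content of the computation; the Frobenius reciprocity and Hurewicz steps are then essentially formal.
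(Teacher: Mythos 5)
The paper itself offers no proof of \fullref{dobrenko2}: the result is quoted verbatim from Dobre\'nko, prefaced only by the remark that it follows ``easily'' from \fullref{dobrenko1}. So there is nothing to match your argument against line by line; the closest internal model is \fullref{identification}, where the authors compute groups of exactly this type by excision to a tubular neighborhood and Poincar\'e--Lefschetz/Thom duality, reducing $H^{n-k}(N,N{-}B;\cdot)$ to invariants in $H^0(B;\cdot)$ and $H_{n-k}(N,N{-}B;\cdot)$ to coinvariants in $H_0(B;\cdot)$. Your route --- universal cover, relative Hurewicz, the universal-coefficient spectral sequence to identify the bottom (co)homology with $V_1\otimes_{\mathbb Z\pi}V_1$ and $\mathrm{Hom}_{\mathbb Z\pi}(V_1,V_1)$, then Frobenius reciprocity for the induced module --- is a legitimate and genuinely different derivation, and those formal steps are fine.

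The gap is exactly where you predicted it would be: the sign bookkeeping is asserted, not verified, and the assertion as made does not reproduce the statement. Carry out the Frobenius step with the module structure of \fullref{dobrenko1}(4): writing $H=\mathrm{Im}(i_\#)$ and $h\cdot e_w=\sgn_N(h)e_{hw}$, the relation in $R_\sigma\otimes_{\mathbb Z H}V_1$ is $\sigma(h)r\otimes e_w\sim r\otimes h e_w=\sigma(h)\,r\otimes e_{hw}$, i.e.\ $e_w\sim e_{hw}$ with \emph{no} sign. So your answer for (2) is the free $R$--module on the $H$--orbits of $W(b)$, whereas the theorem asserts the quotient by the full $\pi_1(B)$--action induced from $\pi_1(N)$, which retains $\sgn_N$ and acquires $2$--torsion whenever some $\theta\in\pi_1(B)$ fixes a coset with $\sgn_N(i_\#\theta)=-1$; these are different abelian groups. (Your own sentence is internally inconsistent on this point: if the two twists cancel, what remains is not ``the $\pi_1(B)$--action asserted in (2)''.) The same issue infects (1), where $\mathrm{Hom}_{\mathbb Z H}(R_\sigma,V_1)$ comes out as $\{v: h*v=v\}$, a sum of $R$'s over \emph{finite} orbits rather than over fixed cosets, and where a tubular-neighborhood computation instead produces the normal orientation character $\sgn_N(i_\#\theta)\sgn_B(\theta)$ on $\mathrm{Im}(i_\#)$. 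Reconciling these three candidate twists --- $\sgn_N$ as transcribed in \fullref{dobrenko1}(4), no sign as in your cancellation, and $\sgn_N\sgn_B$ from the Thom isomorphism --- is, as you say, the real content; until it is done the proposal establishes a statement that in general disagrees with the one being proved.
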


The next two results are the local coefficients analogs of \fullref{Cohom-dim} and \fullref{Hom-dim}.

\subsection[Bounds for the cohomology of the preimage of B under f]{Bounds for the cohomology of $f^{-1}(B)$}

\begin{theorem}\label{local-Cohom-dim}
Let $R_N$ be a local coefficient system over $N$. Then $H_i(N,N{-}B; R_N)$ is
zero for $i<n-k$ and it is isomorphic to $H_0(B;R_N\bigotimes \mathcal O_N\bigotimes \mathcal O_B)\approx R/R_0$ for $i=n-k$ where $R_0$ denotes the image of the action.
Denote by $S^0_*$ the image of the homomorphism $f_*\co H_n(M;f^*(R'_N)) \to H_n(N;R'_N)=S^0$ where the ring $S^0=H^0(N;R_n)\cong H_n(N;R'_N)$ is the subring of elements of $R_N$ that are fixed by the action, and $R'_N=R_N\bigotimes \mathcal O_N$ is the induced local system corresponding to the orientation system $\mathcal O_N$. Let $i_0$ be the first integer (if it exists) such that the induced 
homomorphism in homology $f_*\co  H_*(M;f^*R_N) \to H_*(N,N{-}B;R_N)$ is nonzero. Then 
\begin{enumerate}
\renewcommand{\labelenumi}{(\alph{enumi})}
\item If $i_0<n$ then we have that $\check H^{m-i_0}(g^{-1}(B);f^*R_N)\ne 0$ for any map $g$ homotopic to $f$.
\item If $i_0=n$ then for $n-k \leq j\leq n$ the image of $$f_{*_j}\co H_j(M,M-g^{-1}(B);f^*R_N)\to H_j(N,N{-}B;R_N)$$ contains $S^0_*\cdot H_j(N,N{-}B;R_N)$ and the image of the composite   
\begin{multline*}\hfil\check H^{m-j}(g^{-1}(B);f^*(R'_N))\to H_j(M,M-g^{-1}(B);f^*R_N)\\ \to H_j(N,N{-}B;R_N) \to H^{n-j}(B;\iota^*(R'_N))\end{multline*}
contains 
$S^0_*\cdot H^{n-j}(B;\iota^*(R'_N))$ where $\iota\co B\to N$ is the inclusion. 
\end{enumerate}
In particular, $\check H^{m-n}(g^{-1}(B);f^*R_N)$ and $\check H^{m-n+k}(g^{-1}(B);f^*R_N)\ne 0$ are nontrivial.
\end{theorem}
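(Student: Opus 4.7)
The plan is to mirror the argument of \fullref{Cohom-dim} at every step, replacing ordinary (co)homology by (co)homology with local coefficients and substituting the twisted Poincar\'e--Lefschetz duality of Spanier \cite[Theorem~6.4]{Spanier} for its orientable counterpart. First I would establish the opening claim about $H_{i}(N,N{-}B;R_N)$: twisted duality gives $H_i(N,N{-}B;R_N)\cong H^{n-i}(B;\iota^{*}R'_N)$, which vanishes for $n-i>k$, and at $i=n-k$ a second application of twisted duality on $B$ identifies it with $H_0(B;R_N\otimes\mathcal O_N\otimes\mathcal O_B)\cong R/R_0$.

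For part~(a), I would reproduce the commutative square
\begin{equation*}
\begin{CD}
H_i(M;f^{*}R_N) @>{f_{*}}>> H_i(N;R_N)\\
@VV{j_{1*}}V                     @VV{j_{2*}}V\\
H_i(M,M-g^{-1}(B);f^{*}R_N) @>{f_{*}}>> H_i(N,N{-}B;R_N)
\end{CD}
\end{equation*}
and use that $g\simeq f$ induces the same homomorphisms on $H_{*}(M;f^{*}R_N)$, so that the hypothesis forces $H_{i_0}(M,M{-}g^{-1}(B);f^{*}R_N)$ to be nonzero. Twisted Poincar\'e--Lefschetz duality for the compact set $g^{-1}(B)\subset M$ then identifies this group with $\check H^{m-i_0}(g^{-1}(B);f^{*}R_N)$, as desired.

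For part~(b), given $\alpha\in H_j(N,N{-}B;R_N)$ and $s_0\in S^0_{*}$, the key move is to write $s_0\cdot\alpha$ as a cap product involving the twisted fundamental class $\mu'_N\in H_n(N;\mathcal O_N)$. Specifically, twisted duality realizes $\alpha$ as $\beta\frown\mu'_N$ for a class $\beta$ coming from $H^{n-j}(B;\iota^{*}R'_N)$, and by definition of $S^0_{*}$ there exists $\psi\in H_n(M;f^{*}R'_N)$ with $f_{*}\psi=s_0\cdot\mu'_N$. Twisted naturality of the cap product (the local-coefficients analogue of \cite[p.\,239]{dold}) then yields
\[s_0\cdot\alpha=\beta\frown f_{*}\psi=f_{*}\bigl((f^{*}\beta)\frown\psi\bigr),\]
showing $S^0_{*}\cdot H_j(N,N{-}B;R_N)$ lies in the image of $f_{*_j}$. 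The image estimate for the longer composite drops out by postcomposing with the duality map $H_j(N,N{-}B;R_N)\to H^{n-j}(B;\iota^{*}R'_N)$, and specializing to $j=n-k$ and $j=n$ in the computations of the first paragraph gives the concluding nontriviality of $\check H^{m-n}(g^{-1}(B);f^{*}R_N)$ and $\check H^{m-n+k}(g^{-1}(B);f^{*}R_N)$.

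The main obstacle I expect is the twisted cap-product bookkeeping in part~(b): one must make precise the pairings between the local systems $R_N$, $\mathcal O_N$ and their pullbacks so that the identity $s_0\cdot\alpha=f_{*}((f^{*}\beta)\frown\psi)$ is well typed, and verify, via naturality of the duality isomorphism, that $s_0\cdot\mu'_N$ genuinely lifts to a class in $H_n(M;f^{*}R'_N)$. Once those coefficient identifications are settled, the diagrammatic and duality steps proceed formally, just as in the proof of \fullref{Cohom-dim}.
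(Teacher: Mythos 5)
Your proposal is correct and follows exactly the route the paper intends: the published proof consists of the single remark that the argument is the same as that of \fullref{Cohom-dim} with Poincar\'e--Lefschetz duality replaced by its local-coefficient version from Spanier, and your plan is precisely that argument carried out step by step (the vanishing/identification of $H_i(N,N{-}B;R_N)$, the commutative square for part~(a), and the twisted cap-product naturality for part~(b)). No discrepancies with the paper's approach.
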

\begin{proof} The proof is similar to the proof of \fullref{Cohom-dim} where we use duality with local coefficients as in \cite{Spanier}. 
\end{proof}

\subsection[Bounds for the homology of the preimage of B under f]{Bounds for the homology of $f^{-1}(B)$}
As in the trivial local coefficient case, $H_*$ means Steenrod homology.  We remark that for a compact space the homology  $\bar H_*$ used in \cite{Spanier} coincide with the Steenrod homology.
\begin{theorem}\label{local-Hom-dim}
Let $R_N$ be a local coefficient system over $N$. For $i<n-k$, we know that $H^i(N,N{-}B; R_N)$ is
zero. For $i=n-k$, it is isomorphic to $H^0(B;i_2^*(R_N))\approx R^0$ where $R^0$ denotes the set of elements of $R$ that are fixed by the action. Let $i_0$ be the first integer (if it exists) such that the induced 
homomorphism in cohomology $f^*\co H^i(N,N{-}B;R_N)\to H^i(M;f^*R_N)$ is nonzero. Here, $f^*R_N$ denotes the local system over $M$ by pulling back $R_N$ by $f$. Then 
\begin{enumerate}
\renewcommand{\labelenumi}{(\alph{enumi})}
\item If $i_0<n$ then $H_{m-i_0}(g^{-1}(B);f^*R_N)\ne 0$ for any map $g$ homotopic to $f$.
\item If $i_0=n$ then $f^{*^{n-k}}$ is also nonzero and  $H_{m-n+k}(g^{-1}(B);f^*R_N)\ne 0$.
\end{enumerate}
\end{theorem}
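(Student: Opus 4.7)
The plan is to adapt the proof of \fullref{Hom-dim} to the local coefficient setting, working with Alexander--Spanier cohomology and its local-coefficient analog, and using the local-coefficient version of Alexander--Spanier duality of Spanier \cite{Spanier} in place of the simple-coefficient duality.

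For part (a), first observe that since $g\simeq f$, the pullback local systems $g^*R_N$ and $f^*R_N$ on $M$ are canonically isomorphic, so we may work with $g$ in place of $f$ throughout. Set up the commutative square
\begin{equation*}
\begin{CD}
\bar H^i(N,N{-}B;R_N) @>{g^*}>> \bar H^i(M,M{-}g^{-1}(B);g^*R_N) \\
@V{j_2^*}VV @V{j_1^*}VV \\
\bar H^i(N;R_N) @>{g^*}>> \bar H^i(M;g^*R_N)
\end{CD}
\end{equation*}
exactly as in \eqref{exact'}. By the hypothesis on $i_0$, the composite from upper-left to lower-right is nonzero at $i=i_0$, so the relative group $\bar H^{i_0}(M,M{-}g^{-1}(B);g^*R_N)$ must be nontrivial. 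Since $g^{-1}(B)$ is compact in $M$, Alexander--Spanier duality with local coefficients then yields $H_{m-i_0}(g^{-1}(B);g^*R_N)\ne 0$, identifying this with $H_{m-i_0}(g^{-1}(B); f^*R_N)$.

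For part (b), the plan is to first show that $f^{*^{n-k}}$ is nonzero, and then apply part (a) with the new index $i_0=n-k$ to conclude $H_{m-n+k}(g^{-1}(B);f^*R_N)\ne 0$. To see that $f^{*^{n-k}}$ is nonzero, I would mirror the cap-product naturality argument in the proof of \fullref{Hom-dim}(b): use Poincar\'e--Lefschetz duality with local coefficients (twisting by the orientation systems $\mathcal O_N$ and $\mathcal O_M$) to translate the hypothesis ``$f^{*^n}$ nonzero on $H^n(N,N{-}B;R_N)$'' into the nontriviality of a corresponding homological map, whose image contains an element of the form $r_0\cdot\mu_N$ with $\mu_N$ the twisted fundamental class and $r_0$ in the relevant subring of coefficients. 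For a nonzero $x'\in H^{n-k}(N,N{-}B;R_N)\cong R^0$ and a suitable $r_0$ with $r_0\cdot x'\ne 0$, the naturality of the cap product with local coefficients gives
\begin{equation*}
r_0\cdot x'\frown\mu_N \;=\; x'\frown (r_0\cdot\mu_N) \;=\; x'\frown f_*(\beta) \;=\; f_*\bigl(f^*(x')\frown\beta\bigr),
\end{equation*}
and the left-hand side is nonzero by duality ($H^{n-k}(N,N{-}B;R_N)$ pairs nontrivially with $H_k(B;\cdot)$), forcing $f^*(x')\ne 0$.

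The main technical obstacle is the careful bookkeeping of the orientation twists $\mathcal O_N$, $\mathcal O_M$, $\mathcal O_B$ in the local-coefficient versions of Poincar\'e--Lefschetz duality, cap-product naturality, and Alexander--Spanier duality. Although the arguments are formally parallel to those of \fullref{sec4}, one must verify that the coefficient systems on both sides of each identity match after tensoring with the appropriate orientation system; this is routine but delicate, especially when neither $M$ nor $N$ is assumed orientable. A secondary point is to confirm that Steenrod homology with local coefficients admits the required duality pairing with Alexander--Spanier cohomology for the compact subset $g^{-1}(B)$ of the manifold $M$, which is standard but needs to be cited with care.
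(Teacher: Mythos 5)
Your proposal is correct and takes essentially the same route as the paper: the published proof is the single remark that the argument is ``similar to the proof of \fullref{Hom-dim} where we use duality with local coefficients as in Spanier,'' and your write-up is precisely that adaptation (the local-coefficient version of diagram \eqref{exact'}, compact-support duality for $g^{-1}(B)$, and the cap-product naturality argument for part (b)). Your closing caveat about bookkeeping the orientation twists $\mathcal O_M,\mathcal O_N$ is apt, since the paper's own statement and one-line proof suppress them entirely.
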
     
\begin{proof} The proof is similar to the proof of \fullref{Hom-dim} where we use duality with local coefficients as in \cite{Spanier}. 
\end{proof}

Recall from \cite[Theorem 2.7, p.15]{dob2}, there is a universal element, which is the primary obstruction to deforming the identity map $1_N$ off the subspace $B$. We shall call this element the {\it Thom class\/} of $B$ in $N$ with coefficients in $R_U$, denoted by $\tau_B$. It was shown that for any map $f\co (M,M-f^{-1}(B)) \to (N,N{-}B)$, the primary obstruction ${o^{n-k}}_B(f)$ to deforming $f$ off the subspace $B$ on the $(n-k)$--th skeleton of $M$ is the pullback $f^*(\tau_B)$ of the Thom class where $k=\dim B$ and $n=\dim N$. 

The next result shows the relevancy of the local coefficient $R_U$. Since the pair $(N,N{-}B)$ is $(n-k-1)$--connected, the first possible dimension for which the homomorphism $f^*\co H^*(N,N{-}B;R_U) \to H^*(M;f^*(R_U))$ is nontrivial is $n-k$. Thus, for any local system $R_N$, we call the homomorphism $f^*\co H^{n-k}(N,N{-}B;R_N) \to H^{n-k}(M;f^*(R_N))$ the {\it primary\/} homomorphism.

\begin{proposition}\label{primary-homo} If the pullback of the Thom class in $H^{n-k}(N,N{-}B;R_U)$ by the map $f$ is trivial, then the primary homomorphism $f^*\co H^{n-k}(N,N{-}B;R_N) \to H^{n-k}(M;f^*(R_N))$ is trivial for any local coefficient system $R_N$. In particular the primary homomorphism with local coefficient $R_N$ is trivial.
\end{proposition}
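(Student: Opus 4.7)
My plan is to reduce the statement to a single application of the vanishing of $f^*(\tau_B)$ by exhibiting every class in $H^{n-k}(N,N{-}B;R_N)$ as a change-of-coefficient image of the universal Thom class $\tau_B$. The key observation is that because $(N,N{-}B)$ is $(n-k-1)$--connected, the group $H^{n-k}(N,N{-}B;R_N)$ is small enough to be detected by the primary homotopy group with its $\pi_1(N)$--action, exactly the information encoded by $\tau_B$.

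More precisely, the first step is to invoke the local-coefficient Hurewicz/universal-coefficient theorem for the $(n-k-1)$--connected pair $(N,N{-}B)$ to obtain a natural isomorphism
\begin{equation*}
H^{n-k}(N,N{-}B;R_N) \;\cong\; \operatorname{Hom}_{\mathbb Z[\pi_1(N)]}\bigl(\pi_{n-k}(N,N{-}B),\,R_N\bigr).
\end{equation*}
Under this identification the Thom class $\tau_B \in H^{n-k}(N,N{-}B;R_U)$ with $R_U=\pi_{n-k}(N,N{-}B)$ corresponds to the identity homomorphism, which is exactly what characterizes it as the universal primary obstruction in \cite{dob2}.

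Given any class $\alpha \in H^{n-k}(N,N{-}B;R_N)$, the isomorphism above produces an equivariant homomorphism $h_\alpha\colon R_U \to R_N$ whose induced change-of-coefficient map $(h_\alpha)_*$ satisfies $(h_\alpha)_*(\tau_B) = \alpha$. Then naturality of $f^*$ with respect to coefficient homomorphisms gives
\begin{equation*}
f^*(\alpha) \;=\; f^*\bigl((h_\alpha)_*(\tau_B)\bigr) \;=\; (h_\alpha)_*\bigl(f^*(\tau_B)\bigr) \;=\; (h_\alpha)_*(0) \;=\; 0,
\end{equation*}
where the third equality uses the standing hypothesis that $f^*(\tau_B)=0$. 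Since $\alpha$ was arbitrary, the primary homomorphism vanishes, and the last sentence of the proposition follows by specializing to the given local system $R_N$.

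The step I expect to be the main obstacle is the universal-coefficient/Hurewicz identification in the first paragraph: one must check that for an $(n-k-1)$--connected pair the degree $n-k$ cohomology with any local coefficients really is given by $\pi_1(N)$--equivariant homomorphisms out of $\pi_{n-k}(N,N{-}B)$, and that the Thom class of \cite{dob2} corresponds to the identity homomorphism under this isomorphism. Once that is in place, the argument is just functoriality applied to $f^*(\tau_B)=0$.
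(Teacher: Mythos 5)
Your argument is correct, but it is a genuinely different proof from the one in the paper. The paper argues geometrically: the vanishing of $f^*(\tau_B)$ means the primary obstruction to pushing $f$ off $B$ vanishes, so $f|_{M^{n-k}}$ is homotopic to a map into $N{-}B$; then any class coming from $H^{n-k}(N,N{-}B;R_N)$ dies on $M^{n-k}$ by the long exact sequence of the pair, and a cellular-cochain argument shows that restriction $H^{n-k}(M;R_N)\to H^{n-k}(M^{n-k};R_N)$ is injective, forcing the primary homomorphism to vanish. You instead argue purely algebraically, using the identification $H^{n-k}(N,N{-}B;R_N)\cong \operatorname{Hom}_{\mathbb Z[\pi_1]}(\pi_{n-k}(N,N{-}B),R_N)$ for the $(n-k-1)$--connected pair to write every class as $(h_\alpha)_*(\tau_B)$ and then invoke functoriality in the coefficients. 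The step you flag is indeed the only real content: it is the standard fact that for a pair whose $\mathbb Z[\pi_1]$--chain complex is acyclic below degree $n-k$, the universal-coefficient map onto $\operatorname{Hom}$ is an isomorphism and the universal obstruction class corresponds to the identity homomorphism; this is exactly how the universal element of Dobre\'nko is constructed, so you may take it as given (with the usual proviso that $n-k\ge 2$, which is implicit throughout the paper). Your route is shorter and makes the universality of $\tau_B$ do all the work; the paper's route has the advantage that the same deformation argument immediately yields the subsequent remark about the higher homomorphisms $f^*\co H^{n-k+j}(N,N{-}B;R_N)\to H^{n-k+j}(M;f^*R_N)$ vanishing when $f$ can be deformed off $B$ on the $(n-k+j)$--th skeleton, a statement your universal-coefficient identification cannot reach since it fails above the first nonvanishing degree.
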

\begin{proof} From the hypothesis, it follows that the primary obstruction to deforming $f$ into $N{-}B$ through the $(n-k)$--th skeleton is zero. So, up to homotopy, the restriction $f|_{M^{n-k}}\co M^{n-k} \to N$ of $f$ to the $(n-k)$--th skeleton  factors through $N{-}B$.

Note that the homomorphism $i^*\co H^{n-k}(M;i^*R_N) \to H^{n-k}(M^{n-k};R_N)$ in cohomology induced by the inclusion $i\co M^{n-k} \to M$ is injective for any local system of coefficients $R_N$. This follows from the cellular definition of cohomology with local coefficients as presented by Whitehead \cite{Whitehead}. For the 
cellular chains $C^j(\ \  ;R_N)$ for the two complexes $M$ and $M^{n-k}$ are isomorphic for  $j\leq n-k$, and 
$C^j(M^{n-k};R_N)$ is zero for $j>n-k$. So  
$H^{n-k}(M^{n-k};R_N)$ is a quotient of $C^{n-k}(M^{n-k};R_N)$ by the subgroup ${\rm Im}~\delta^{n-k-1}$ and $H^{n-k}(M;R_N)$ is the quotient of the $(n-k)$--cocycles contained in $C^{n-k}(M^{n-k};R_N)$ by the same subgroup ${\rm Im}~\delta^{n-k-1}$.
Hence the induced homomorphism by the composite map $M^{n-k} \to (N{-}B) \to N \to (N, N{-}B)$ is the zero homomorphism for any local coefficients because of the long exact sequence of the pair $(N,N{-}B)$ in cohomology. But this homomorphism is the composite of the primary homomorphism followed by $i^*$. Since $i^*$ is injective, the first assertion follows. The last part is clear from the first part.
\end{proof}

\begin{example} The primary homomorphism with local coefficient in general gives more information than the primary homomorphism with trivial coefficients (with $\mathbb Z$ coefficients for example). If we take the fiber map $p\co N_3 \to T^2$ from the three-dimensional Heisenberg manifold into the two-dimensional torus, this map has the property that the pullback of the fundamental class of $T^2$ with trivial $\mathbb Z$ coefficients is zero. However, the pullback of the twisted Thom class with local coefficients as given by Dobre{\'n}ko \cite{dob} is nonzero. See Gon{\c{c}}alves and Wong \cite{daci-peter2} for more details about this and other examples. 
\end{example}

\begin{example}\label{nonprimary} Let $n>2$, $M=N=S^{2n}$ and $B=S^n$. If $f$ is the identity then the primary homomorphism is trivial because $H^n(S^{2n};R_N)=0$ for any local coefficient system $R_N$. But the $2n$--th homomorphism is clearly the identity.
\end{example}

\begin{remark} In view of the proof of \fullref{primary-homo}, if $f$ is deformable into $N{-}B$ off the $(n-k+j)$--th skeleton of $M$ then the homomorphism $$f^*\co H^{n-k+j}(N,N{-}B;R_N) \to H^{n-k+j}(M;f^*(R_N))$$ is trivial for any $R_N$. This implies that the nonvanishing of the $(n-k+j)$--th homomorphism implies that there exist higher obstructions to deformation. In contrast to defining secondary or higher obstruction (sets), these higher homomorphisms are easily defined and therefore can be used as sufficient conditions for the existence of preimages.
\end{remark}

\begin{remark} For the coincidence case with simple coefficients in \fullref{sec4.3}, if the homomorphism $f^{*n}\co H^n(Y^{\times};R)\to H^n(M;R)$ is trivial then all the homomorphisms $f^{*k}$ are trivial for $k>n$. We do not know if the same phenomenon would hold true when local coefficients are used. In particular, we do not know whether the higher homomorphisms $f^{*k}$ would vanish if $f^{*n}\co H^n(Y^{\times}; \pi_n(N,N-b)) \to H^n(M;\pi_n(N,N-b))$ is trivial.
\end{remark}

\bibliographystyle{gtart}
\bibliography{link}

\end{document}